\title{Orthogonal forms of Kac--Moody groups \\ are acylindrically hyperbolic}
\author[1]{Pierre-Emmanuel Caprace\thanks{F.R.S.-FNRS research associate, supported in part by the ERC (grant \#278469)}}
\author[1]{David Hume\thanks{Supported by the ERC (grant \#278469)}}
\affil[1]{IRMP, UCLouvain, 1348 Louvain-la-Neuve, Belgium}
\date{First draft: October 13, 2014; revised: February 25, 2015}
\newtheorem*{btheorem}{Main Theorem}
\newtheorem*{bcorollary}{Corollary}
\newtheorem{proposition}{Proposition}[section]
\newtheorem{theorem}[proposition]{Theorem}
\newtheorem{lemma}[proposition]{Lemma}
\newtheorem{corollary}[proposition]{Corollary}
\theoremstyle{definition}
\newtheorem{definition}[proposition]{Definition}
\newtheorem{remark}[proposition]{Remark}
\newcommand{\fgen}[1]{\left\langle #1 \right\rangle}
\newcommand{\Stab}{\textup{Stab}}
\newcommand{\set}[1]{\left\{#1\right\}}
\newcommand{\setcon}[2]{\left\{#1\ \left|\ #2\right.\right\}}
\newcommand{\CC}{\mathbf{C}}
\newcommand{\R}{\mathbf{R}}
\newcommand{\Z}{\mathbf{Z}}
\newcommand{\tu}{\textup}
\newcommand{\vareps}{\varepsilon}
\newcommand{\la}{\langle}
\newcommand{\ra}{\rangle}
\newcommand{\Aut}{\mathrm{Aut}}
\newcommand{\Isom}{\mathrm{Is}}
\newcommand{\cat}{$\mathrm{CAT}(0)$\xspace}
\newcommand{\ad}{\textup{ad}\ }
\begin{document}

\maketitle

\begin{abstract}
We give sufficient conditions for a group acting on a geodesic metric space to be acylindrically hyperbolic and mention various applications to groups acting on CAT($0$) spaces. We prove that a group acting on an irreducible non-spherical non-affine building is acylindrically hyperbolic provided there is a chamber with finite stabiliser whose orbit contains an apartment. Finally, we show that the following classes of groups admit an action on a building with those properties: orthogonal forms of  Kac--Moody groups over arbitrary fields, and irreducible graph products of arbitrary groups - recovering a result of Minasyan--Osin. 
\end{abstract}

\section{Introduction}

An \textbf{acylindrically hyperbolic} group, defined by Osin \cite{Os13}, is a group $G$ which admits a non-elementary acylindrical action on a hyperbolic space.

This notion is a far-reaching generalisation of Gromov's relative hyperbolicity. The class of groups satisfying this includes many widely-studied families of finitely generated groups: mapping class groups, outer automorphism groups of free groups, groups acting properly and essentially  on irreducible finite-dimensional \cat cube complexes and small cancellation groups, see \cite{Os13} and references therein.

However,  a remarkable feature of the definition of acylindrical hyperbolicity is that it also allows for examples which are not finitely generated. It follows from results of Cantat--Lamy \cite{CantLamy-13-Cremacylhyp} that the Cremona group (i.e. the group of bi-rational transformations of the complex projective plane) is acylindrically hyperbolic, using an action of this group on an infinite dimensional hyperbolic space.  This provides the first example of a (non-locally compact) connected Hausdorff topological group which is acylindrically hyperbolic.

One goal of this paper is to provide more examples of connected topological groups which are acylindrically hyperbolic. Our examples are orthogonal forms of Kac--Moody groups over arbitrary fields; by definition, Kac--Moody groups over fields are taken in the sense of  Tits \cite{Tits-87-Functor}, and their orthogonal forms are defined as the centraliser of the Chevalley involution (cf. Definition $\ref{defn:orthform}$).

\begin{btheorem} 
Let $A$ be a generalized Cartan matrix of irreducible, non-spherical, non-affine type, and $\mathcal G_A(F)$ be a Kac--Moody group of  type $A$ over a field $F$.  

Then the orthogonal form $\mathcal K_A(F)$ is acylindrically hyperbolic.
\end{btheorem}

If we set $F$ to be the field of order~$2$, then $\mathcal K_A(F)$ is isomorphic to the Weyl group $W$, and the property of acylindrical hyperbolicity is not new: indeed, all irreducible, infinite, non-affine Coxeter groups act properly cocompactly on a \cat space with rank~$1$ elements (see \cite[Cor.~4.7]{CapFuj-10-rnk1}), and this is a sufficient condition for acylindrical hyperbolicity (see \cite{Sisto-contracting} or Proposition~\ref{prop:Sisto} below).  However,  the main motivating situation is when the ground field $F$ is the field of real numbers. In that case  the orthogonal form $\mathcal K_A(\R)$ is a factor in the Iwasawa decomposition of $\mathcal G_A(\R)$ and is a connected Hausdorff topological group when endowed with the Kac--Peterson topology (see \cite{HartnickKohlMars} and references therein). We remark that, over $\R$, the orthogonal forms have only recently been shown to be not simple. This is in stark contrast to the situation for real simple Lie groups, where the centraliser of a Chevalley involution is a compact group, and is simple in many cases \cite[pp.~520--534]{Helg-DG}. Non-trivial quotients were first constructed on the Lie algebra level by physicists for certain types \cites{DBHP06,DKN06}; their arguments were later extended, and promoted to the group level, in \cites{HKL11,GHKW14}. The quotient maps $\Xi \colon \mathcal K_A(\R) \to Q$ contructed in this way are called \textbf{maximal spin representations} of the orthogonal form $\mathcal K_A(\R)$. The quotient group $Q$ happens to be a (finite-dimensional) compact Lie group. Our result on acylindrical hyperbolicity provides many more quotients of those orthogonal forms $\mathcal K_A(\R)$, of a very different flavour: indeed, it is proved in \cite[Th.~8.5]{DGO11}  that all acylindrically hyperbolic groups have non-trivial proper free normal subgroups, and are even \textbf{SQ-universal}, which means that every countable group embeds in one of their quotients. 

Moreover, the criteria for acylindrical hyperbolicity that will be established below applies not only to the orthogonal form $\mathcal K_A(\R)$, but also to many of its subgroups. In particular it will follow that the kernel $\mathrm{Ker}(\Xi)$ of the maximal spin representation is acylindrically hyperbolic (hence SQ-universal), at least for simply laced diagrams, see Corollary~\ref{cor:KerSpinRep}.

\medskip
The Main Theorem will follow from a general criterion of acylindrical hyperbolicity for groups acting on buildings, described below, which
also yields the following result on graph products (see Section~\ref{sec:GraphProd} for the definitions). This was first proved by Minasyan--Osin \cite[Corollary $2.13$]{minas-osin-13} using a different method. 

\begin{bcorollary}\label{cor:GraphProd}
An irreducible graph product of arbitrary groups is acylindrically hyperbolic or virtually cyclic. 
\end{bcorollary}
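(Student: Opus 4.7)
The plan is to realise the graph product $G_\Gamma$ as a chamber-transitive group of type-preserving automorphisms of a right-angled building, verify that the hypotheses of the building criterion proved in the main body of the paper are satisfied, and handle by a separate elementary argument the one degenerate case in which the associated Weyl group is affine.

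First I would associate to every graph product $G_\Gamma$ with defining graph $\Gamma$ and vertex groups $(G_v)_{v \in V(\Gamma)}$ its Davis right-angled building $\Delta_\Gamma$, whose chambers are identified with the elements of $G_\Gamma$ and in which two chambers $g,h$ are declared $v$-adjacent exactly when $g^{-1}h\in G_v$. This is a building whose Weyl group is the right-angled Coxeter group $W_\Gamma$ associated to the same graph $\Gamma$. The left-translation action of $G_\Gamma$ on $\Delta_\Gamma$ is chamber-transitive with trivial, hence finite, chamber stabilisers; as each apartment is a set of chambers, the $G_\Gamma$-orbit of any chamber automatically contains an apartment. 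Thus the only hypotheses of the building criterion that remain to be checked are that the Coxeter system $(W_\Gamma,S)$ is irreducible, non-spherical and non-affine.

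Next, I would match the irreducibility hypothesis on $G_\Gamma$ with combinatorial conditions on $\Gamma$. Irreducibility of $G_\Gamma$ is defined so as to amount to $\Gamma$ not being a non-trivial join, with at least two non-trivial vertex groups; this is precisely the irreducibility of $(W_\Gamma,S)$. Non-sphericity of $W_\Gamma$ is equivalent to $\Gamma$ being non-complete, since two non-adjacent vertices generate an infinite dihedral subgroup of $W_\Gamma$ while all generators commute when $\Gamma$ is complete. A graph product on a complete graph is a direct product and hence not irreducible except when $\Gamma$ has a single vertex, a case ruled out by the definition of irreducibility; so $W_\Gamma$ is automatically non-spherical under our assumptions.

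It then remains to address the affine case. The only irreducible right-angled Coxeter system which is affine is the infinite dihedral one, corresponding to $\Gamma$ being a pair of non-adjacent vertices. If $\Gamma$ is not of that shape, then $(W_\Gamma,S)$ is irreducible, non-spherical and non-affine, so the building criterion from the main body of the paper applies and $G_\Gamma$ is acylindrically hyperbolic. If instead $\Gamma$ consists of exactly two non-adjacent vertices, then $G_\Gamma = A*B$ is a free product: when $|A|=|B|=2$ the group is infinite dihedral, hence virtually cyclic, and otherwise the natural action on the Bass--Serre tree is acylindrical and non-elementary on a hyperbolic space, hence $G_\Gamma$ is acylindrically hyperbolic. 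The main obstacle is not especially deep: it lies in carefully constructing the Davis right-angled building associated to a graph product and checking that its structural properties translate the combinatorics of $\Gamma$ into the exact hypotheses of the building criterion; all remaining verifications are elementary, and the residual free-product case is dispatched by a standard Bass--Serre argument.
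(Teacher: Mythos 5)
Your proposal is correct and follows essentially the same route as the paper: realise $G$ as a chamber-free, chamber-transitive automorphism group of the Davis right-angled building with Weyl group $W_\Gamma$, observe that irreducibility of $\Gamma$ translates into irreducibility of $(W_\Gamma,S)$ and forces the non-spherical case, apply Theorem~\ref{thm:AcylBuilding}, and dispatch the residual two-vertex (infinite dihedral Weyl group) case by a Bass--Serre argument. You are in fact slightly more explicit than the paper in identifying the infinite dihedral group as the unique irreducible affine right-angled Coxeter group and in separating the virtually cyclic subcase $|A|=|B|=2$, but the underlying argument is identical.
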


\subsection{Intermediate results}

The Main Theorem and the Corollary above  are deduced from  the three following results, which should be of independent interest.

The definition of an acylindrically hyperbolic group requires an action of that group on some hyperbolic space. In our setting, it is more natural to consider the actions of Kac--Moody groups on buildings, which are only hyperbolic when the underlying Weyl group is hyperbolic. In order to cover group actions on buildings of arbitrary  non-spherical and non-affine type, we use the projection complexes introduced by Bestvina--Bromberg--Fujiwara which build actions of groups on quasi-trees \cite{BBF13}. In doing so, we obtain sufficient conditions for the existence of non-trivial hyperbolically embedded subgroups from the action of a group on a wider range of geodesic metric spaces. The following statement was implicitly contained in the work of Bestvina--Bromberg--Fujiwara \cite{BBF13} and now appears explicitly as Theorem H in \cite{BBF10}.

\begin{theorem}[Bestvina--Bromberg--Fujiwara]\label{WPDstrcontAcylhyp}
Let $G$ be a group which acts by isometries on a geodesic metric space $X$. Suppose $h\in G$ is of infinite order, has positive translation length and satisfies the following conditions.
\begin{enumerate}
\item The action of $h$ on $X$ is \textbf{weakly properly discontinuous} (or \textbf{WPD} for short): for every $D>0$ and $x\in X$ there exists some $m>0$ such that
\[
 \setcon{g\in G}{d(x,gx)<D \tu{ and } d(h^mx,gh^mx)<D}
\]
is finite.
\item The $X$-orbit of $H=\fgen{h}$ is \textbf{strongly contracting}: there exists a point $x\in X$ with associated closest point projection $\pi_x:X\to H(x)$ such that 
\[
 \sup\setcon{\tu{diam}(\pi_x(B(y;r)))}{B(y;r)\cap H(x)=\varnothing}
\]
is finite.
\end{enumerate}
Then there is a subgroup $H'$ containing $H$ as a finite index subgroup which is hyperbolically embedded in $G$. Moreover, $G$ is either virtually cyclic or acylindrically hyperbolic.
\end{theorem}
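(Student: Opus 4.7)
The plan is to convert the action of $G$ on $X$ --- which is not a priori on a hyperbolic space --- into an action of $G$ on a quasi-tree in such a way that $h$ remains a loxodromic element satisfying WPD. The projection complex machinery of Bestvina--Bromberg--Fujiwara \cite{BBF13} does exactly this; once we are in the hyperbolic setting, the existence of a virtually cyclic hyperbolically embedded subgroup follows from the work of Dahmani--Guirardel--Osin \cite{DGO11}, and the dichotomy in the last sentence is Osin's characterisation of acylindrical hyperbolicity in terms of the existence of a proper infinite hyperbolically embedded subgroup \cite{Os13}.

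First I would replace $H = \fgen{h}$ by its ``almost-normaliser'' $\widetilde H$, namely the set of $g \in G$ whose translate $gH(x)$ is at finite Hausdorff distance from $H(x)$. Because $h$ has positive translation length and satisfies WPD on $X$, the orbit $H(x)$ is quasi-geodesic and $\widetilde H$ contains $H$ with finite index. I would then form the $G$-equivariant collection $\mathcal Y = \{gH(x) : g \in G\}$, indexed by $G/\widetilde H$. For each $Y \in \mathcal Y$ the strong contraction hypothesis supplies a coarsely well-defined closest-point projection $\pi_Y \colon X \to Y$, and one defines the projection distance $d_Y^\pi(Y_1, Y_2) := \tu{diam}\bigl(\pi_Y(Y_1) \cup \pi_Y(Y_2)\bigr)$ for distinct $Y_1, Y_2 \in \mathcal Y \setminus \{Y\}$.

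The core technical step is to verify that $(\mathcal Y, d^\pi)$ satisfies the projection axioms of \cite{BBF13}: the finiteness axiom (only finitely many $Y$ see a large projection of any given pair) and the Behrstock-type inequality (for any triple $Y_1, Y_2, Y_3$, at most one of the three values $d_{Y_i}^\pi(Y_j, Y_k)$ exceeds a uniform constant). Both are consequences of strong contraction via standard Morse-geodesic arguments: if a ball disjoint from $Y$ had large projection to $Y$, one could stitch geodesic segments into a quasi-geodesic bigon violating the bounded-diameter estimate in hypothesis (ii). Granting the axioms, the BBF construction produces a quasi-tree of metric spaces $\mathcal C(\mathcal Y)$, which is hyperbolic, on which $G$ acts by isometries, and into which each $Y \in \mathcal Y$ embeds isometrically; in particular $h$ acts loxodromically on the isometric copy of $H(x)$, and hence on $\mathcal C(\mathcal Y)$.

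It remains to transport WPD from $X$ to $\mathcal C(\mathcal Y)$: an element $g \in G$ that coarsely fixes a long arc of the axis of $h$ in $\mathcal C(\mathcal Y)$ necessarily coarsely fixes a correspondingly long subarc of the quasi-geodesic $H(x) \subset X$, so the WPD assumption on $X$ passes to $\mathcal C(\mathcal Y)$. The main theorem of \cite{DGO11} then produces a virtually cyclic subgroup $H'$ containing $H$ with finite index and hyperbolically embedded in $G$, and Osin's dichotomy gives the last sentence: if $[G : H'] < \infty$ then $G$ is virtually cyclic, otherwise it is acylindrically hyperbolic. I expect the main obstacle to be the equivariant verification of the projection axioms: one must propagate the constants from strong contraction uniformly across all $G$-translates of $H(x)$, and it is here that the bulk of the work in \cite{BBF13} lies.
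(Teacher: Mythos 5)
Your proposal follows essentially the same route as the paper: both feed the WPD and strong-contraction hypotheses into the Bestvina--Bromberg--Fujiwara projection-complex machinery of \cite{BBF13} (the paper cites Theorem~4.26 there as a black box where you sketch the verification of the projection axioms) to obtain an isometric action of $G$ on a quasi-tree in which the $\la h\ra$-orbit sits as a totally geodesically embedded line, and both identify $H'$ as the coarse stabiliser of that orbit, virtually cyclic and containing $H$ with finite index by \cite[Proposition~4.7]{BBF13}. The only real divergence is in the endgame: where you propose to transport the WPD property from $X$ to the quasi-tree and then apply the WPD-loxodromic criterion of \cite{DGO11}, the paper instead verifies the definition of hyperbolic embeddedness directly --- properness, quasi-convexity, and geometric separation --- deducing geometric separation from the bottleneck property of the quasi-tree (any path between two distinct copies of $\R$ must pass through two balls of uniformly bounded radius, via \cite[Lemma~3.9]{BBF10} and \cite[Proposition~2.6]{Hu12}) and then applying \cite[Theorem~4.42]{DGO11}. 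Both endgames are standard and lead to the same conclusion; the paper's version avoids having to re-establish WPD in the new space, while yours avoids the explicit bottleneck estimate.
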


Notice that no assumption is made on the group $G$, just on the action. The strongly contracting property asks that $X$ has, in some sense, a `hyperbolic direction'.

When the first version of this paper appeared, Theorem $\ref{WPDstrcontAcylhyp}$ did not appear in the existing versions of \cite{BBF10} or \cite{BBF13} so a proof was given in this paper. It remains here for completeness.

\medskip
In Section~\ref{sect:suffacylhyp} below, we present various applications to Theorem~\ref{WPDstrcontAcylhyp} for groups acting properly on \cat spaces and containing \textbf{rank~$1$ isometries} (i.e. hyperbolic  isometries whose axes do not bound a half-flat). We show that a \cat group contains rank~$1$ isometries if and only if it is acylindrically hyperbolic or virtually cyclic (see Corollary~\ref{cor:cat0groups}). Moreover, any (possibly non-uniform) lattice in the isometry group of a proper cocompact \cat space is acylindrically hyperbolic as soon as the ambient space admits rank~$1$ isometries (Corollary~\ref{cor:Lattices}). We also point out an application to groups acting properly on (possibly non-proper) finite-dimensional \cat cube complexes (Corollary~\ref{cor:CCC}). Most of those applications seems to be new. They imply moreover  that certain lattices appearing in the realm of Kac--Moody groups over \emph{finite} fields are acylindrically hyperbolic (Remark~\ref{rem:KM}). It should however be emphasized that, in all those applications, the WPD hypothesis required by Theorem~\ref{WPDstrcontAcylhyp} is guaranteed by the fact that the action of the ambient group under consideration is metrically proper. 

However, the Main Theorem typically involves groups whose action on the associated building is not proper. Indeed,  for Kac--Moody groups over the reals, the action of the orthogonal form $\mathcal K(\R)$ on its building is \emph{not} metrically proper. In fact the stabiliser in $\mathcal K(\R)$ of any singular point, i.e. any point which is not in the interior of a chamber, is an uncountable compact subgroup. The key property of those actions, which we shall exploit and which will turn out to be sufficient to prove acylindrical hyperbolicity, is that the stabiliser of every \emph{regular} point (i.e. a point lying in the interior of a chamber) happens to be finite. 
 
\medskip
Those considerations naturally lead us to the second part, where we find conditions on a (not necessarily proper) group action on a building which ensure that the group has a WPD element. Let $X$ be a building of type $(W,S)$. Throughout, the generating set $S$ is assumed finite. An automorphism $h\in\Aut(X)$ is said to be \textbf{regular} if it is hyperbolic and if it has an axis $\ell$ contained in an apartment $\mathcal{A}$ such that $\ell$ is not contained in a bounded neighbourhood of any wall of $\mathcal{A}$. One shows that, in that case, every $h$-axis is contained in the apartment $\mathcal A$, see Lemma~\ref{lem:bi-infinite} below. Therefore, it makes sense to define the \textbf{combinatorial hull} of $h$,  as the combinatorial convex hull of $\ell$, i.e. the smallest gallery-convex set of chambers containing $\ell$. If the building is thick, this coincides with the intersection of all apartments containing $\ell$. One verifies that this is indeed independent of the choice of $\ell$ (see Lemma~\ref{lem:bi-infinite} below). The shape of the combinatorial hull can be very different depending on the type of the building. Indeed, if the building is Euclidean, the  combinatorial hull of $h$ is a whole apartment (hence, a maximal flat of $X$). If $X$ is thick, non-Euclidean and $h$ is a rank 1 isometry, then the combinatorial hull is a bounded neighbourhood of $\ell$. 

\begin{theorem}\label{regularWPD} 
Let $G$ be a group acting on a building $X$. Let $h \in G$ be a regular element whose combinatorial hull contains a chamber with finite stabiliser in $G$.  Then the action of $h$ on $X$ is WPD. 
\end{theorem}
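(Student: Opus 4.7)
First I reduce to the case where $x$ is the barycentre of a chamber $c$ in the combinatorial hull of $h$ whose stabiliser in $G$ is finite; this is immediate up to enlarging $D$ by $2\, d(x, \mathrm{bary}(c))$, since the WPD condition for the original point with constant $D$ implies the same condition for $\mathrm{bary}(c)$ with the enlarged constant (using that $h$ is an isometry to handle the displacement at $h^m\mathrm{bary}(c)$). Write $A_D := \setcon{g \in G}{d(x, gx) < D}$; the WPD set is then $A_D \cap h^m A_D h^{-m}$.

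The central tool is the convexity of the displacement function $y \mapsto d(y, gy)$, which holds because the building $X$, equipped with its CAT($0$) realisation, is a CAT($0$) space. Consequently any $g$ satisfying $d(x, gx) < D$ and $d(h^m x, g h^m x) < D$ displaces every point of the geodesic segment $[x, h^m x]$ by less than $D$. Since apartments are convex in $X$, this segment lies in the apartment $\mathcal A$ that carries the axis $\ell$, and it fellow-travels $\ell$ at bounded distance throughout.

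The heart of the argument is to combine this with the regularity of $h$ to deduce that, for $m$ sufficiently large, the set $\setcon{gc}{g \text{ satisfies the WPD condition}}$ is finite; since $\Stab_G(c)$ is finite by hypothesis, the map $g \mapsto gc$ is finite-to-one and this will suffice. Regularity says that the long segment of $\ell$ between $x$ and $h^m x$ is not contained in a bounded neighbourhood of any wall of $\mathcal A$, and so this segment meets arbitrarily many walls of many distinct parallelism classes as $m \to \infty$. An element $g$ that displaces every point of this segment by at most $D$ must therefore map the chambers encountered along the segment into their close combinatorial vicinity; using the canonical retraction $\rho_{\mathcal A, c} \colon X \to \mathcal A$ based at $c$, which is non-expansive on galleries and an isometry on every apartment containing $c$, one pins down the combinatorial position of $gc$ to finitely many possibilities within $\mathcal A$. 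The subtlety here is that $X$ need not be locally finite, so this finiteness cannot be read off from CAT($0$)-proximity alone: it genuinely exploits the combinatorial rigidity which regularity of $h$ forces upon approximately preserved axes.

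Once $\setcon{gc}{g \in \text{WPD set}}$ is known to be finite, the proof concludes: if $gc = g'c$ then $g^{-1}g' \in \Stab_G(c)$, so the WPD set is covered by finitely many cosets of a finite group, and is therefore finite. The main obstacle, as indicated, is precisely the combinatorial step using regularity; the CAT($0$) convexity argument and the reduction to a chamber with finite stabiliser are by comparison soft.
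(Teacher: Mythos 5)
Your reductions are fine: replacing $x$ by the barycentre of a finite-stabiliser chamber $c$ in the hull at the cost of enlarging $D$, and using convexity of the displacement function to control $g$ along all of $[x,h^mx]$, are both valid and match the soft part of the paper's argument. But the proof stops exactly where the theorem starts. The assertion that one "pins down the combinatorial position of $gc$ to finitely many possibilities within $\mathcal A$" via the retraction $\rho_{\mathcal A,c}$ is not a proof, and as stated the mechanism cannot work: $\rho_{\mathcal A,c}$ has (possibly infinite) fibres when $X$ is not locally finite, so confining $\rho_{\mathcal A,c}(gc)$ to a finite set of chambers of $\mathcal A$ says nothing about the number of possible chambers $gc$ in $X$. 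What must actually be shown is that $gc$ (or some other chamber with finite stabiliser attached to $g$) lies \emph{inside} the combinatorial hull $\mathcal H$: since $\mathcal H$ is contained in a single apartment it is locally finite even when $X$ is not, and only then does "bounded displacement" translate into "finitely many possibilities".

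That containment is the genuine content, and your outline contains no mechanism for it. Note that $d(x,gx)<D$ does not place $gx$ in $\mathcal H$, nor even in $\mathcal A$, so you cannot conclude $gc\subset\mathcal H$ directly from proximity. The route the paper takes requires three nontrivial ingredients, each exploiting regularity quantitatively: (a) an attraction property of $\mathcal H$ (Proposition~\ref{prop:regularSegment}): any geodesic whose endpoints stay within bounded distance of the axis $\ell$ must enter $\mathcal H$ except within a uniformly bounded distance of its endpoints — applied to the translated segment $[gx,gh^mx]$, this forces its interior points into $\mathcal H$; (b) equidistribution of $\vareps$-regular points along such segments (Lemma~\ref{lem:Apt}), which upgrades "a point of $g[x,h^mx]$ lies in $\mathcal H$" to "an entire chamber $ga$ lies in $\mathcal H$", where $a$ is a chamber met by the interior of $[x,h^mx]$; and (c) since those interior chambers $a,a'$ need not themselves have finite stabilisers, an argument (Lemma~\ref{lem:ConvexHull}) locating an $\la h\ra$-translate of the finite-stabiliser chamber inside the combinatorial convex hull of $\{a,a'\}$, so that $g$ is seen to move a finite-stabiliser chamber within the locally finite set $\mathcal H$ by a bounded amount. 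None of these is supplied or replaced by anything in your sketch, so the proof has a genuine gap precisely at the step you yourself flag as the main obstacle.
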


By assumption, the building $X$ must be non-spherical. The hypothesis on chamber stabilisers is critical for the WPD condition. 

\medskip
In the  third part, we combine Theorems~\ref{WPDstrcontAcylhyp} and~\ref{regularWPD} in order to establish  the following criterion of acylindrical hyperbolicity for groups acting on buildings. 

\begin{theorem}\label{thm:AcylBuilding}
Let $X$ be a building of irreducible non-spherical, non-affine type $(W, S)$, and let $G \leq \Aut(X)$. Assume that  there exists a chamber  with finite stabiliser, and whose $G$-orbit contains a subset of an apartment which is an orbit under a finite index subgroup of the Weyl group $W$ (e.g. the $G$-orbit of that chamber contains an apartment). 

Then $G$ is acylindrically hyperbolic.
\end{theorem}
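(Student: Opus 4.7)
The plan is to exhibit an element $h \in G$ satisfying both hypotheses of Theorem~\ref{WPDstrcontAcylhyp}, with the geodesic metric space taken to be the \cat Davis realisation of $X$. The element $h$ will be regular in the sense preceding Theorem~\ref{regularWPD}, with combinatorial hull containing a chamber of finite $G$-stabiliser, so Theorem~\ref{regularWPD} will supply the WPD condition. The axis of $h$ will also be rank~$1$ in the \cat sense, and the standard rank~$1$/contraction theory in \cat spaces (cf.~\cite{Sisto-contracting}) will supply the strongly contracting condition. Theorem~\ref{WPDstrcontAcylhyp} will then yield that $G$ is virtually cyclic or acylindrically hyperbolic, and the former will be excluded by producing two such elements with distinct endpoints at infinity.

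To build $h$, I apply Caprace--Fujiwara~\cite{CapFuj-10-rnk1} to $(W,S)$, which is irreducible, non-spherical and non-affine, to obtain a rank~$1$ element of $W$. Replacing $W_0$ by the intersection of its $W$-conjugates (still of finite index in $W$) and replacing the rank~$1$ element by a positive power, I may assume $w \in W_0$ is rank~$1$ with $W_0 \trianglelefteq W$. Let $c_0 \in A$ satisfy $W_0 c_0 \subseteq G \cdot c$; conjugating $w$ within $W_0$ if necessary, I arrange that the axis $\ell \subset A$ of $w$ crosses $c_0$. Then the whole sequence $(w^n c_0)_{n \in \Z}$ lies along $\ell$ and is contained in $G\cdot c$, so every chamber on it has finite $G$-stabiliser. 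For each $n$, the set of $g \in G$ with $g c_0 = w^n c_0$ is a coset of the finite group $\Stab_G(c_0)$; a pigeon-hole argument exploiting this finiteness produces an element $h \in G$ whose \cat axis coincides with $\ell$ and whose translation length is a positive integer multiple of the Weyl translation length of $w$. In particular $h$ is regular, and its combinatorial hull agrees with that of $\ell$ and contains infinitely many chambers of $G\cdot c$.

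With $h$ in hand, Theorem~\ref{regularWPD} supplies the WPD condition. The axis $\ell$ is rank~$1$ in the \cat Davis realisation of $X$ because it is a rank~$1$ axis of $w$ inside the convex, isometrically embedded subcomplex $A \subseteq X$; the rank~$1$/contraction theory then gives that closest-point projection onto the $\fgen{h}$-orbit has uniformly bounded image on metric balls disjoint from the orbit, which is precisely hypothesis~(ii) of Theorem~\ref{WPDstrcontAcylhyp}. Hence $G$ is virtually cyclic or acylindrically hyperbolic. Since $W$ is non-elementary, $W_0$ contains a second rank~$1$ element $w'$ whose axis endpoints in the \cat boundary differ from those of $\ell$; running the same construction for $w'$ produces a hyperbolic element $h' \in G$ whose boundary endpoints are distinct from those of $h$, which forbids $G$ from being virtually cyclic. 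We conclude that $G$ is acylindrically hyperbolic.

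The main obstacle is the pigeon-hole construction of $h$ in the second paragraph: a naive lift of the single shift $c_0 \mapsto w c_0$ has no \emph{a priori} control on its action on $w c_0, w^2 c_0, \ldots$, and its \cat axis may leave the apartment $A$ altogether. Extracting a lift whose axis is genuinely $\ell$ is where the finiteness of $\Stab_G(c_0)$ is essential, since it bounds the ambiguity in each single-step lift and allows a pigeon-hole on the finitely many transition patterns between consecutive chambers along $\ell$.
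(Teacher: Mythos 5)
Your overall architecture is the paper's: produce a regular element $h\in G$ whose combinatorial hull contains a chamber with finite stabiliser, get WPD from Theorem~\ref{regularWPD}, strong contraction from rank-one considerations, conclude via Theorem~\ref{WPDstrcontAcylhyp}, and rule out virtual cyclicity. But the step you yourself flag as the main obstacle --- the pigeon-hole construction of a lift $h$ whose axis is $\ell$ --- is a genuine gap, and I do not see how to close it along the lines you indicate. Choosing $g_n\in G$ with $g_nc_0=w^nc_0$, the only control you have on the transition $g_n^{-1}g_{n+1}$ is that it sends $c_0$ to \emph{some} chamber at Weyl-distance $w$ from $c_0$; in a thick building there are infinitely many such chambers, so the set of possible ``transition patterns'' is infinite and there is nothing finite to pigeon-hole on. The finiteness of $\Stab_G(c_0)$ only bounds each coset $g_n\Stab_G(c_0)$, not the relationship between consecutive lifts; without more, one cannot even conclude that $g_ng_m^{-1}$ is hyperbolic, let alone that its axis lies in $\mathcal A$. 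The missing idea in the paper is \emph{straightness}: one first replaces the contracting element $w$ by a conjugate of minimal length in its conjugacy class, which is then straight (Lemma~\ref{lem:Marquis}, after \cite{Marquis-straight}). For \emph{any} $h\in G$ with $\delta(b,hb)=w$ (no pigeon-hole needed), straightness forces $\delta(h^mb,h^nb)=w^{n-m}$ because the words $w^k$ are reduced; hence the orbit $\setcon{h^nb}{n\in\Z}$ is Weyl-isometric to a subset of an apartment and therefore contained in one, which is exactly what pins an axis of $h$ to a line parallel to $\ell$ and places $b$ in the combinatorial hull (Lemma~\ref{lem:StraightWeylDist}).

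There are two further gaps. First, regularity of $w$ itself: a rank-one element of $W$ produced by \cite{CapFuj-10-rnk1} need not be regular, since a power of it may centralise a reflection, in which case its axis stays within bounded distance of the corresponding wall. The paper's Proposition~\ref{prop:Coxeter} needs a separate argument (via the Niblo--Reeves cube complex and \cite[Prop.~5.1]{CapraceSageev}) to produce a rank-one element whose axis endpoints avoid all wall boundaries. Second, your deduction of strong contraction is invalid as stated: being rank one inside the convex subcomplex $\mathcal A\subseteq X$ does not make $\ell$ rank one in $X$ (a half-flat bounded by $\ell$ need not lie in $\mathcal A$), and even rank one in $X$ yields strong contraction only when $X$ is \emph{proper} (\cite[Th.~5.4]{BestvinaFujiwara}), whereas the buildings in the intended applications (Kac--Moody groups over infinite fields, graph products of infinite groups) are not locally finite. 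The paper instead transfers strong contraction from the Davis complex of $(W,S)$ to $X$ via \cite[Th.~5.1]{CapFuj-10-rnk1}, again through Lemma~\ref{lem:StraightWeylDist}. Your argument excluding the virtually cyclic case is fine.
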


The Main Theorem is deduced from Theorem~\ref{thm:AcylBuilding} by showing that orthogonal forms of Kac--Moody groups satisfy its hypotheses, see Proposition~\ref{findeltOrthform} below. The key point is that the orthogonal form $\mathcal K_A(F)$ has a natural action with finite chamber stabilisers on a building $X$, which contains moreover an apartment $\mathcal A$ such that the stabiliser $\tilde W = \Stab_{\mathcal K_A(F)}(\mathcal A)$ is transitive on the chambers of $\mathcal A$. The group $\tilde W$ is called the \textbf{extended Weyl group}, and is isomorphic to an extension of the Weyl group $W$ by an elementary abelian group of order $2^n$, where $n$ is the size of the generalized Cartan matrix $A$. Therefore, Theorem~\ref{thm:AcylBuilding} implies the  following sharper version of the Main Theorem.

\begin{corollary}\label{cor:sharper}
Let $\mathcal K_A(F)$ be as in the Main Theorem. Any subgroup $H \leq \mathcal K_A(F)$ containing a finite index subgroup of the extended Weyl group $\tilde W$ is acylindrically hyperbolic (hence non-simple).  
\end{corollary}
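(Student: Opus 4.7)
The plan is to deduce the corollary directly from Theorem~\ref{thm:AcylBuilding} applied to $H$ acting on the building $X$ of $\mathcal G_A(F)$, using the information about $\mathcal K_A(F)$ encoded in Proposition~\ref{findeltOrthform}. The building $X$ is of irreducible, non-spherical, non-affine type by the hypothesis on $A$, so that part of the hypothesis of Theorem~\ref{thm:AcylBuilding} is free; the remaining two items are a finite chamber stabiliser in $H$ and a suitable $H$-orbit inside an apartment.

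The first item is immediate: by Proposition~\ref{findeltOrthform}, every chamber of $X$ has finite stabiliser in $\mathcal K_A(F)$, so a fortiori in $H$. For the second item, I would pick the apartment $\mathcal A$ supplied by Proposition~\ref{findeltOrthform}, on which the extended Weyl group $\tilde W = \Stab_{\mathcal K_A(F)}(\mathcal A)$ acts transitively on chambers. By hypothesis, $H$ contains a finite index subgroup $\tilde W_0 \leq \tilde W$. Fix any chamber $c$ of $\mathcal A$. Then $\tilde W_0 \cdot c \subseteq \mathcal A$ is contained in the $H$-orbit of $c$, and one checks that $\tilde W_0 \cdot c = W_0 \cdot c$, where $W_0$ is the image of $\tilde W_0$ under the natural quotient $\tilde W \to W$. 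Since $\tilde W / W$ is an elementary abelian $2$-group of order $2^n$, hence finite, $W_0$ has finite index in $W$. Thus the $H$-orbit of $c$ contains a subset of $\mathcal A$ which is an orbit under a finite index subgroup of the Weyl group, exactly as required.

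Having verified all hypotheses, Theorem~\ref{thm:AcylBuilding} gives that $H$ is acylindrically hyperbolic. Non-simplicity is then a consequence of \cite[Th.~8.5]{DGO11}, which asserts that every acylindrically hyperbolic group contains a non-trivial proper free normal subgroup (indeed, is SQ-universal), as already noted in the introduction.

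The argument is essentially formal once the two ingredients are in place, so I do not expect a genuine obstacle. The only subtle point is the translation between orbits under $\tilde W_0$ and orbits under a finite index subgroup of $W$; this needs the (routine but necessary) observation that the kernel of $\tilde W \to W$ acts trivially on the set of chambers of $\mathcal A$, so that $\tilde W_0$ and its image $W_0$ have the same orbits on chambers. With that in hand, the reduction to Theorem~\ref{thm:AcylBuilding} is one line.
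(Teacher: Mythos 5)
Your proposal is correct and follows the same route as the paper: the paper also deduces Corollary~\ref{cor:sharper} by feeding Proposition~\ref{findeltOrthform} (finite chamber stabilisers, chamber-transitivity of $\tilde W$ on $\mathcal A^+$) into Theorem~\ref{thm:AcylBuilding}, with the orbit hypothesis verified exactly via the observation that a finite-index subgroup of $\tilde W$ surjects onto a finite-index subgroup of $W$ having the same chamber orbits, since the kernel of $\tilde W \to W$ lies in $T$ and hence acts trivially on the chambers of the apartment. Your write-up is in fact more detailed than the paper's one-line justification; the only cosmetic slip is writing ``$\tilde W / W$'' for the kernel of the extension $\tilde W \to W$, which does not affect the argument.
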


In their recent work \cite{GHKW14}, Ghatei--Horn--K\"{o}hl--Wei{\ss} have shown that in case $A$ is simply laced (i.e. all off-diagonal entries belong to $\{0, -1\}$), the image of the extended Weyl group under the maximal spin representation $\Xi \colon \mathcal K_A(\R) \to Q$ is finite. In other words, this means that $\mathrm{Ker}(\Xi)$ contains a finite index subgroup of the extended Weyl group $\tilde W$. The previous corollary therefore implies the following. 

\begin{corollary}\label{cor:KerSpinRep}
If $A$ is simply laced, then the kernel of the maximal spin representation $\Xi \colon \mathcal K_A(\R) \to Q$ is acylindrically hyperbolic (hence non-simple). \qed
\end{corollary}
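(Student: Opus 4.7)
The plan is to derive this as an immediate corollary of the sharper version of the Main Theorem (Corollary~\ref{cor:sharper}), using the result of Ghatei--Horn--K\"ohl--Wei{\ss} cited in the paragraph immediately preceding the statement. Since the heavy lifting is already encapsulated in Corollary~\ref{cor:sharper} and in \cite{GHKW14}, there is no substantial obstacle; the only thing to verify is that the hypothesis of Corollary~\ref{cor:sharper}, namely that $\mathrm{Ker}(\Xi)$ contains a finite index subgroup of the extended Weyl group $\tilde W$, does follow formally from the finiteness of $\Xi(\tilde W)$.

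More precisely, by \cite{GHKW14}, for $A$ simply laced the restriction $\Xi|_{\tilde W} \colon \tilde W \to Q$ has finite image. Consequently, the kernel of this restriction, namely $\tilde W \cap \mathrm{Ker}(\Xi)$, is a subgroup of $\tilde W$ of finite index. This subgroup is contained in $\mathrm{Ker}(\Xi)$ by construction, so $\mathrm{Ker}(\Xi)$ contains a finite index subgroup of $\tilde W$.

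Corollary~\ref{cor:sharper} then applies with $H = \mathrm{Ker}(\Xi)$ and yields that $\mathrm{Ker}(\Xi)$ is acylindrically hyperbolic. Non-simplicity is a general feature of acylindrically hyperbolic groups: by \cite[Th.~8.5]{DGO11} (already invoked in the introduction), every acylindrically hyperbolic group admits a non-trivial proper normal subgroup (in fact, a proper free normal subgroup witnessing SQ-universality), so in particular it cannot be simple. This completes the proof.
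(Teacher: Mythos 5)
Your proposal is correct and follows exactly the paper's argument: the finiteness of $\Xi(\tilde W)$ from \cite{GHKW14} gives that $\tilde W \cap \mathrm{Ker}(\Xi)$ is a finite index subgroup of $\tilde W$ contained in $\mathrm{Ker}(\Xi)$, so Corollary~\ref{cor:sharper} applies directly. The extra appeal to \cite[Th.~8.5]{DGO11} for non-simplicity is consistent with (and already implicit in) the paper's statement of Corollary~\ref{cor:sharper}.
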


\subsection*{Plan of the paper}
We introduce orthogonal forms in section \ref{sect:orthogforms}, recalling the fundamentals of Kac--Moody algebras and groups, twin buildings and BN pairs as part of this. Sections \ref{sect:suffacylhyp}, \ref{sect:regisoms} and \ref{sec:AcylBuilding} deal with the proofs of Theorems \ref{WPDstrcontAcylhyp}, \ref{regularWPD} and \ref{thm:AcylBuilding}   respectively. Section~\ref{sect:suffacylhyp} also contains applications showing that various groups acting properly on \cat spaces are acylindrically hyperbolic. Those examples include several families of lattices in locally finite buildings constructed from Kac--Moody groups over finite fields.  In the  final section~\ref{sect:othformaction} we come back to orthogonal forms of Kac--Moody groups and show that they satisfy the hypotheses of Theorem~\ref{thm:AcylBuilding}. We also point out that graph products of arbitrary groups fall within the scope of Theorem~\ref{thm:AcylBuilding}, thereby deducing the Corollary stated above.

\subsection*{Acknowledgements}
The authors would like to thank Ralf K\"{o}hl, Denis Osin and an anonymous referee for comments on earlier versions of this paper, and Timoth\'ee Marquis for useful remarks on straight elements.

\section{Orthogonal forms}\label{sect:orthogforms}
Let $A=(A_{ij})$, $i,j\in\set{1,\dots,n}$, be a \textbf{generalised Cartan matrix}: an $n\times n$ matrix where $A_{ii}=2$ for each $i$, $A_{ij}$ is a non-negative integer whenever $i\neq j$ and $A_{ij}=0$ if and only if $A_{ji}=0$.

We define a \textbf{Kac--Moody algebra} $\mathfrak{g}_A$ as the Lie algebra over $\CC$ (or $\R$) generated by $3n$ elements $e_i,f_i,h_i$ satisfying the relations
\begin{itemize}
 \item $[h_i,h_j]=0$ and $[e_i,f_j]=\delta_{ij}h_i$,
 \item $[h_i,e_j] = A_{ij}e_j$ and $[h_i,f_j]=-A_{ij}f_j$,
 \item $(\ad e_i)^{1-A_{ij}} e_j = 0$ and $(\ad f_i)^{1-A_{ij}} f_j = 0$, whenever $i\ne j$.
\end{itemize}
Each triple $\set{e_i,f_i,h_i}$ generates a subalgebra $\mathfrak{g}_i$ of $\mathfrak{g}_A$, which is isomorphic to $\mathfrak{sl}_2$.
The $h_i$ generate an abelian subalgebra, called the \textbf{Cartan subalgebra}, which we denote by $\mathfrak{h}$.

The algebra $\mathfrak g_A$ admits an involutory automorphism called the \textbf{Chevalley involution} $\omega$, defined by
\[
 \omega(e_i)=-f_i,\quad \omega(f_i)=-e_i, \quad \omega(h_i)=-h_i.
\]
The subalgebra of elements fixed by $\omega$ is denoted by $\mathfrak{k}_A$. This is often referred to as the \textbf{maximal compact subalgebra}.

Now let $F$ be an arbitrary field. The (simply connected) \textbf{Tits' functor}, defined by Tits~\cite{Tits-87-Functor}, associates a unique  Kac--Moody group $\mathcal G_A(F)$ to the Lie algebra $\mathfrak{g}_A$, where each $\mathfrak{g}_i$ is associated to a subgroup $G_i$ of $\mathcal G_A(F)$ isomorphic to $\mathrm{SL}_2(F)$ and $\mathfrak{h}$ is associated to an abelian subgroup $T\cong(F^\times)^n$ . In some of the literature Kac--Moody groups which arise in this way are called \emph{split minimal}. We briefly recall some properties of that construction. 

There are injective homomorphisms $\psi_i \colon \mathrm{SL}_2(F)\to \mathcal G_A(F)$ and $\eta \colon (F^\times)^n\to \mathcal G_A(F)$ with images $G_i$ and $T$ respectively. The diagonal matrices in each $G_i$ are identified with elements of $T$ by the relations
\[
 \psi_i
  \left(
   \begin{array}{cc}
   y & 0 \\ 0 & y^{-1}
   \end{array}
  \right)
 = \eta\big(y(\delta_{1i},\dots, \delta_{ni})\big).
\]
Moreover the group $\mathcal G_A(F)$ is generated by $T \cup (\bigcup_{i=1}^n G_i)$. We refer the reader to \cite{Tits-87-Functor} for a full description of the defining relations of $\mathcal G_A(F)$ with respect to that generating set. 

A Kac--Moody group $\mathcal G_A(F)$ admits a (saturated) \textbf{twin BN pair} $(B^+,B^-,N)$, i.e.  a triple of subgroups   satisfying the following conditions:
\begin{itemize}
 \item $G=\fgen{B^+,N}=\fgen{B^-,N}$,
 \item $T=B^+\cap B^- = B^+\cap N = B^- \cap N\cong  (F^\times)^n$ is a normal subgroup of $N$,
 \item $W=N/T$ is a rank $n$ Coxeter group generated by a set of reflections $S$,
 \item for every $w\in W$, $s\in S$ and $\vareps\in\set{+,-}$, we have $sB^\vareps w\subseteq B^\vareps wB^\vareps\cup B^\vareps swB^\vareps$,
 \item for every $s\in S$ and $\vareps\in\set{+,-}$, we have $sB^\vareps s\not\subseteq B^\vareps$.
\end{itemize}

Using the notation above, the subgroup $B^+$ (resp. $B^{-}$) is generated by $T$ and all elements of the form
\[
\psi_i
   \left(
    \begin{array}{cc}
    1 & y \\ 0 & 1
    \end{array}
   \right)
\quad
\tu{resp.}\ \ 
\psi_i
   \left(
    \begin{array}{cc}
    1 & 0 \\ y & 1
    \end{array}
   \right)
\]
while $N$ is generated by $T$ and all elements of the form
\[
 r_i = \psi_i
    \left(
     \begin{array}{cc}
     0 & 1 \\ -1 & 0
     \end{array}
    \right),
\]
which represent elements of the generating set $S$ in $W = N/T$. 

The coset spaces $G/B^+$ (resp. $G/B^-$) can be viewed as the set of chambers of a building $X^+$ (resp. $X^-$) of type $(W,S)$. Letting $G$ act diagonally by left translation on $X^+\times X^-$ we recover $B^\vareps$ as the stabiliser of the chamber $C^\vareps$ associated to the trivial coset $B^\vareps$ and $T$ is the stabiliser of the \textbf{standard twin chamber} $(C^+,C^-)$. The two buildings $X^+$ and $X^-$ are related by a so-called twinning; the twin chamber $(C^+,C^-)$ is contained in a unique twin apartment $\mathcal A$, called the \textbf{standard twin apartment}. We then recover $N$ as the stabiliser of $\mathcal{A}$. The elements $r_i$ defined above stabilise $\mathcal A$ and act as the reflections across the walls of the standard chambers $C^+$ and $C^-$.

Via the Tits' functor, we may also define a Chevalley involution on a Kac--Moody group, also denoted by $\omega$. On each subgroup $G_i\cong SL_2(F)$ it acts as transpose-inverse, on $T$ it acts by taking inverses. This defines $\omega$ on a generating set of $\mathcal G_A(F)$; one verifies that all the defining relations are indeed preserved. As a result it swaps $B^+$ and $B^-$.

\begin{definition}\label{defn:orthform}  
Let $\mathcal G_A(F)$ be a Kac--Moody group and let $\omega$ be the Chevalley involution described above. The \textbf{orthogonal form} of $G_A(F)$ is the subgroup
\[
\mathcal K_A(R)=\setcon{g\in G_A(F)}{\omega(g)=g}.
\]
\end{definition}

\section{Acylindrical hyperbolicity}\label{sect:suffacylhyp}
In this section we combine known results of \cite{BBF13} and \cite{DGO11} to give a sufficient condition for a group to be acylindrically hyperbolic which does not require any restriction on the class of groups studied. We then discuss some applications concerning groups acting on \cat spaces.

Let $(X,d)$ be a metric space and let $Y\subseteq X$. We will use $\mathcal{N}_\varepsilon(Y)$ to denote the \textbf{$\varepsilon$-neighbourhood} of $Y$, that is the set of all $x\in X$ for which there is some $y\in Y$ with $d(x,y)<\varepsilon$.

\subsection{A criterion for acylindrical hyperbolicity}
The original definition of acylindrical hyperbolicity for a group requires the existence of a non-elementary acylindrical action on a hyperbolic space \cite{Os13}. Here we will use the following equivalent definition (cf. \cite[Theorems $4.42$, $6.14$]{DGO11}). 

\begin{definition}\label{defn:acylhyp}  
Let $G$ be a group which acts by isometries on a hyperbolic metric space $Q$, and let $H$ be a subgroup of $G$. Suppose that the following conditions hold. 
\begin{enumerate}
\item $H$ acts properly on $Q$. 
\item There is some $s\in Q$ such that the $H$-orbit of $s$ is quasi-convex in $Q$.

\item $H$ is \textbf{geometrically separated}: there exists some $s\in Q$ such that for every $\varepsilon>0$ there exists an $R>0$ such that 
\[
 \tu{diam}\left(H(s) \cap \mathcal{N}_\varepsilon(gH(s))\right)\geq R
\]
implies $g\in H$.
\end{enumerate}
Then we say that $H$ is \textbf{hyperbolically embedded} in $G$. If $G$ admits an infinite, proper subgroup $H$ which  is hyperbolically embedded  (i.e. $H$ is a \textbf{non-degenerate} hyperbolically embedded subgroup in the terminology from \cite{DGO11}),  we say that $G$ is \textbf{acylindrically hyperbolic}.
\end{definition}
We note that in the above two conditions it is equivalent to say that this holds for some $s\in Q$ or for every $s\in Q$, however it is certainly not the case in general that $R$ may be chosen independently of $s$.

We are now ready to establish our criterion for acylindrical hyperbolicity.

\begin{proof}[Proof of Theorem~\ref{WPDstrcontAcylhyp}] 
By \cite[Proposition $4.7$]{BBF13}, the subgroup $H'$ consisting of all elements $g\in G$ such that $H(s)$ and $g(H(s))$ are at finite Hausdorff distance is virtually cyclic and contains $H$ as a finite index subgroup.

The hypotheses of Theorem \ref{WPDstrcontAcylhyp} are precisely those of \cite[Theorem $4.26$]{BBF13}, so we deduce that $G$ admits an action by isometries on a specific quasi-tree $Q$. To complete the proof we now show that this action satisfies the hypotheses of Definition \ref{defn:acylhyp}.

The quasi-tree $Q$ is constructed from a collection of copies of $\R$ indexed by the left cosets of $H'$ in $G$, attached in such a way that the resulting space $Q$ is connected and each $\R_{gH'}$ is \textbf{totally geodesically embedded}, i.e. the only geodesic between two points in $\R_{gH'}$ is the one inside this line. The element $h$ acts on $L=\R_{H'}$ by translation, so the action of $H$ is proper and the $H'$-orbit of a point on this line is quasi-convex.

The stabiliser of $L$ in $G$ is precisely $H'$. If $g\not\in H'$ then $gH'(s)$ is contained in $\R_{gH'}$. From the construction of $Q$ it follows that any path between two distinct copies of $\R$ must meet two balls of fixed radius with their centres on the two lines: this follows from \cite[Lemma 3.9]{BBF10} and is explicitly proved in \cite[Proposition 2.6]{Hu12}. Therefore, for any $\varepsilon>0$, the set $H'(s) \cap \mathcal{N}_\varepsilon(gH'(s))$ has finite diameter depending only on $\varepsilon$. Thus, (iii) holds. We may now apply \cite[Theorem~4.42]{DGO11} and deduce that $H'$ is hyperbolically embedded in $G$. 

Finally, the group $G$ is acylindrically hyperbolic if $H'$ is proper, so it suffices to ensure $G\neq H'$, or equivalently, that $G$ is not virtually cyclic.
\end{proof}

\subsection{Groups acting on \cat spaces with rank~$1$ elements}

An isometry $g$ of a \cat space $X$ is called a \textbf{rank $1$ isometry} if no $g$-axis bounds a half-flat. When the space $X$ is proper, every rank~$1$ isometry is strongly contracting: this is proved in \cite[Th.~5.4]{BestvinaFujiwara}. In particular, Theorem~\ref{WPDstrcontAcylhyp} readily implies  the following fact, which was first observed by A.~Sisto \cite{Sisto-contracting}. 

\begin{proposition}\label{prop:Sisto}
Let $G$ be a discrete group acting properly by isometries on a proper \cat space $X$.  If $G$ contains a rank $1$ isometry, then $G$ is acylindrically hyperbolic or virtually cyclic. \qed
\end{proposition}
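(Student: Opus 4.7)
The plan is to feed the rank~$1$ isometry $h \in G$ directly into Theorem~\ref{WPDstrcontAcylhyp} and to check its two hypotheses (WPD and strongly contracting orbit) using, respectively, the properness of the action and the classical result of Bestvina--Fujiwara on rank~$1$ axes.

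First I would fix a rank~$1$ isometry $h \in G$ and an axis $\ell$ of $h$. By definition $h$ is hyperbolic, hence of infinite order and of positive translation length, and $\ell$ does not bound a half-flat in $X$. Pick a base point $x \in \ell$, so that the orbit $H(x) = \fgen{h}(x)$ lies on $\ell$ and is a net in $\ell$. The first task is to verify the strongly contracting hypothesis: this is where properness of $X$ (and not just of the action) enters. The key input is \cite[Th.~5.4]{BestvinaFujiwara}, which states that in a proper \cat space every rank~$1$ axis is strongly contracting, in the sense that the closest-point projection onto $\ell$ has uniformly bounded image on every metric ball disjoint from $\ell$. Since $H(x)$ is cobounded in $\ell$, the projection onto $H(x)$ differs from the projection onto $\ell$ by a bounded error, so strong contraction transfers from $\ell$ to $H(x)$; this yields condition~(ii) of Theorem~\ref{WPDstrcontAcylhyp}.

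Next I would verify the WPD condition~(i). This follows at once, and more easily than the general WPD condition one usually checks, from the hypothesis that the $G$-action on $X$ is proper: for any $D > 0$ and any $x \in X$, the set
\[
 \setcon{g \in G}{d(x,gx) < D}
\]
is already finite, because $g$ must send the compact ball $\overline{B}(x, D/2)$ to a set meeting $\overline{B}(x, D/2)$ and the action is proper. A fortiori the smaller set appearing in the WPD condition in Theorem~\ref{WPDstrcontAcylhyp} is finite for every $m$, so~(i) holds.

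Having verified both hypotheses, Theorem~\ref{WPDstrcontAcylhyp} applies and gives the desired dichotomy: $G$ is either virtually cyclic or acylindrically hyperbolic. There is no serious obstacle here, since the proper action hypothesis trivialises the WPD verification; the only substantive ingredient is the Bestvina--Fujiwara strong-contraction theorem, which is cited as a black box. Were one to attempt to drop properness of $X$ (keeping only properness of the action), strong contraction of rank~$1$ axes would cease to be automatic and this would become the main difficulty; but for the statement as written this is not an issue.
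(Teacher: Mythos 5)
Your proposal is correct and follows the same route as the paper: the paper likewise deduces the statement immediately from Theorem~\ref{WPDstrcontAcylhyp}, citing \cite[Th.~5.4]{BestvinaFujiwara} for the strong contraction of rank~$1$ axes in a proper \cat space and observing that properness of the action makes the WPD condition automatic. Your additional remarks (transferring contraction from $\ell$ to the cobounded orbit $H(x)$, and the role of properness of $X$ versus properness of the action) are accurate elaborations of details the paper leaves implicit.
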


The following noteworthy consequence of another result of A.~Sisto shows that if $G$ acts cocompactly, then the hyperbolically embedded cyclic subgroups essentially coincide with the cyclic subgroups generated by rank 1 elements.

\begin{proposition}\label{prop:cat0groups}
Let $G$ be a discrete group acting properly and cocompactly by isometries on a proper \cat space $X$.  Given an element $g \in G$ of infinite order, the following conditions are equivalent:
\begin{enumerate}
\item $g$ is rank~$1$.

\item $g$ is contained in a hyperbolically embedded virtually cyclic subgroup of $G$.
\end{enumerate}
\end{proposition}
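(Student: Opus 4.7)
The forward direction (i)$\Rightarrow$(ii) is essentially a repackaging of Proposition~\ref{prop:Sisto} once the full conclusion of Theorem~\ref{WPDstrcontAcylhyp} is unpacked. Indeed, properness of the action of $G$ on $X$ implies that every element acts with the WPD property automatically, and rank~$1$ isometries of a proper \cat space are strongly contracting by \cite[Th.~5.4]{BestvinaFujiwara}. Applying Theorem~\ref{WPDstrcontAcylhyp} to $H = \fgen{g}$ therefore produces a virtually cyclic subgroup $H'$ of $G$ containing $g$ which is hyperbolically embedded in $G$.

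For the converse (ii)$\Rightarrow$(i), the plan is to transfer the hyperbolic embeddedness of the virtually cyclic subgroup through the \v{S}varc--Milnor quasi-isometry into a strongly contracting property in $X$, and then use this to exclude half-flats. The key input is a theorem of Sisto: in any finitely generated group $G$, a virtually cyclic subgroup $H$ is hyperbolically embedded in $G$ if and only if its orbits in some (equivalently, any) Cayley graph of $G$ are strongly contracting (equivalently, Morse). Since $G$ acts properly and cocompactly on the proper \cat space $X$, the \v{S}varc--Milnor lemma gives a $G$-equivariant quasi-isometry between $G$ (with any word metric) and $X$, and the strongly contracting property of a subset is preserved under such quasi-isometries. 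Consequently the $H$-orbit of any basepoint $x_0 \in X$ is strongly contracting in $X$.

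To finish, let $\ell$ be an axis of $g$ in $X$. Since $H$ is virtually cyclic containing the infinite-order element $g$, the set $\ell$ lies at finite Hausdorff distance from $H \cdot x_0$, and therefore $\ell$ itself is strongly contracting in $X$. Suppose for contradiction that $\ell$ bounds a half-flat $F \cong \R \times [0,\infty) \subseteq X$, with $\ell$ identified with $\R \times \set{0}$. For any $r > 0$, pick a metric ball $B \subseteq F$ of radius $r$ centred at a point of height $t > r$; then $B$ is disjoint from $\ell$, while its closest-point projection onto $\ell$ is an interval of diameter $2r$. Taking $r$ arbitrarily large violates the strongly contracting property of $\ell$. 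Hence no axis of $g$ bounds a half-flat, i.e.\ $g$ is a rank~$1$ isometry.

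The main obstacle is locating and invoking the correct form of Sisto's equivalence between hyperbolic embeddedness of a virtually cyclic subgroup and the strongly contracting/Morse property of its orbits in the Cayley graph; once that black box is accepted, the rest of the argument reduces to the standard \v{S}varc--Milnor transport of geometric properties combined with the elementary observation that a strongly contracting quasi-geodesic in a \cat space cannot lie on the boundary of a half-flat.
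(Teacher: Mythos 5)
Your forward direction is exactly the paper's argument: properness of the $G$-action makes every hyperbolic element WPD, rank~$1$ isometries of a proper \cat space are strongly contracting by \cite[Th.~5.4]{BestvinaFujiwara}, and Theorem~\ref{WPDstrcontAcylhyp} then yields the hyperbolically embedded virtually cyclic overgroup. The converse also has the same skeleton as the paper (Sisto's quasi-convexity theorem plus the \v{S}varc--Milnor quasi-isometry), but the property you choose to transport creates a genuine gap. What \cite{Sisto-Morse} proves is that a hyperbolically embedded subgroup is \emph{Morse} in the Cayley graph: every quasi-geodesic with endpoints in $H$ stays within a distance of $H$ controlled by the quasi-geodesic constants. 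It does not assert that the orbit is \emph{strongly contracting} in the sense used here (uniformly bounded closest-point projections of disjoint metric balls), and your parenthetical ``(equivalently, Morse)'' is false in general Cayley graphs: strong contraction implies Morse, but not conversely. More seriously, the step ``the strongly contracting property of a subset is preserved under such quasi-isometries'' is simply not true: strong contraction is notoriously \emph{not} a quasi-isometry invariant (there are groups with an element whose orbit is strongly contracting with respect to one finite generating set but not another), whereas the Morse property is. So the chain ``hyperbolically embedded $\Rightarrow$ strongly contracting in the Cayley graph $\Rightarrow$ strongly contracting in $X$'' breaks at both links.

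The repair is to run the argument with the Morse property throughout, which is what the paper does. If some axis $\ell$ of $g$ bounds a half-flat $F \cong \R \times [0,\infty)$, then for every $n$ three sides of the square of side $2n$ in $F$ give a quasi-geodesic with uniform constants joining two points of $\ell$ and leaving the $n$-neighbourhood of $\ell$; pushing these through the \v{S}varc--Milnor quasi-isometry produces quasi-geodesics in $G$ with endpoints in $\fgen{g}$ that do not remain boundedly close to $\fgen{g}$, contradicting the Morse property of the hyperbolically embedded overgroup. (If you prefer to keep your ball-projection endgame, you must first transport the Morse property to $X$ and then invoke the \cat-specific theorem that Morse quasi-geodesics in \cat spaces are strongly contracting; that equivalence is an extra input, not a formal consequence of quasi-isometry invariance.) A minor further point: before speaking of an axis you should record that $g$ is a hyperbolic (semisimple) isometry, which follows from $g$ having infinite order and the action being proper and cocompact, and you should take $\ell$ to be an axis that actually bounds a half-flat, since failure of rank~$1$ only guarantees that \emph{some} axis does.
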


\begin{proof}
As noticed above, it is a consequence of Theorem~\ref{WPDstrcontAcylhyp}  that (i) implies (ii). Conversely, if $g$ is not rank $1$, then $g$ is a hyperbolic isometry (because $g$ is of infinite order and $G$ is a discrete cocompact group of isometries) having an axis which bounds a half-flat. Since $G$ is quasi-isometric to $X$, we infer that $G$ contains quasi-geodesics joining points of $\langle g \rangle$ that do not remain within bounded distance of $\langle g \rangle$. By Theorem~1 from \cite{Sisto-Morse}, this implies that (ii) fails.
\end{proof}

The following consequence is immediate.

\begin{corollary}\label{cor:cat0groups}
Let $G$ be a discrete group acting properly and cocompactly by isometries on a proper \cat space $X$.  Assume that $G$ is not virtually cyclic (equivalently $X$ is unbounded but not quasi-isometric to the real line). Then the following conditions are equivalent:
\begin{enumerate}
\item $G$ contains a rank~$1$ isometry.

\item $G$ is acylindrically hyperbolic. \qed
\end{enumerate}
\end{corollary}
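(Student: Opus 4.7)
The plan is to deduce both implications directly from the two preceding propositions, with a small additional input for the converse direction.

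For the implication (i) $\Rightarrow$ (ii), I would simply invoke Proposition~\ref{prop:Sisto}: a proper action of $G$ on a proper \cat space $X$ containing a rank~$1$ isometry forces $G$ to be acylindrically hyperbolic or virtually cyclic. Since the hypothesis of the corollary explicitly rules out the virtually cyclic case, (ii) follows. The equivalence between $G$ being virtually cyclic and $X$ being quasi-isometric to $\R$ (given that $X$ is unbounded) is a consequence of the Milnor--\v{S}varc lemma, which applies because the action is proper and cocompact on a proper geodesic space.

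For (ii) $\Rightarrow$ (i), the strategy is to produce an infinite order element of $G$ that is contained in a hyperbolically embedded virtually cyclic subgroup, so that Proposition~\ref{prop:cat0groups} applies. Concretely, if $G$ is acylindrically hyperbolic, then by the characterization recalled at the start of Section~\ref{sect:suffacylhyp} (namely \cite[Theorems~4.42 and~6.14]{DGO11}), $G$ admits a non-degenerate hyperbolically embedded subgroup; moreover, the DGO machinery guarantees the existence of a \emph{virtually cyclic} hyperbolically embedded subgroup $H' \leq G$ (any generalized loxodromic element has an elementary closure with this property). Picking an infinite order element $g \in H'$ and applying the (ii) $\Rightarrow$ (i) direction of Proposition~\ref{prop:cat0groups} shows that $g$ is rank~$1$, which proves the claim.

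The main (minor) obstacle is the justification that acylindrical hyperbolicity delivers a hyperbolically embedded \emph{virtually cyclic} subgroup, rather than merely a non-degenerate one; this is standard but relies on the DGO theory of generalized loxodromic elements and their elementary closures, and is the only step that is not purely formal given the two preceding propositions.
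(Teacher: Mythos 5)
Your proof is correct and follows the same route the paper intends: the paper states the corollary as an ``immediate'' consequence of Propositions~\ref{prop:Sisto} and~\ref{prop:cat0groups}, with (i)~$\Rightarrow$~(ii) from Proposition~\ref{prop:Sisto} plus the non-virtually-cyclic hypothesis, and (ii)~$\Rightarrow$~(i) from the standard DGO fact that an acylindrically hyperbolic group contains an infinite-order element lying in a virtually cyclic hyperbolically embedded subgroup, to which Proposition~\ref{prop:cat0groups} applies. You have merely made explicit the step the authors leave implicit.
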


Rank~$1$ isometries of \cat spaces should be thought of as `regular'. In particular, one expects that if the full isometry group of a \cat space contains rank~$1$ elements, then any sufficiently big subgroup should also contain such. The following result provides an illustration of this paradigm. 

\begin{proposition}\label{prop:LimitSet}
Let $X$ be a proper \cat space, such that $\Isom(X)$ contains a rank~$1$ element. Let $G \leq \Isom(X)$ be a subgroup whose  limit set is the full visual boundary $\partial X$. Then $G$ also contains rank~$1$ elements.  
\end{proposition}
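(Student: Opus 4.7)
The plan is to build a rank~$1$ element of $G$ by a dynamical ping-pong argument taking the given rank~$1$ isometry $g\in\Isom(X)$ as a template at infinity. Let $\ell$ be a $g$-axis with boundary endpoints $\xi^+,\xi^-\in\partial X$. The first step is to record Ballmann's openness principle for rank~$1$ endpoints: there exist open neighbourhoods $U^\pm\ni\xi^\pm$ in the cone topology on $\partial X$ and a constant $D>0$ such that any pair $(\eta^+,\eta^-)\in U^+\times U^-$ is joined by a bi-infinite geodesic, every such joining geodesic is rank~$1$, and it lies within Hausdorff distance $D$ of $\ell$. The underlying input is that a rank~$1$ boundary point is at positive Tits distance from every other boundary point.

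Next, I would use $L(G)=\partial X$ to choose a basepoint $o\in\ell$ and sequences $(\alpha_n),(\beta_n)\subset G$ such that $\alpha_n\cdot o\to\xi^+$ and $\beta_n\cdot o\to\xi^-$ in $\overline{X}:=X\cup\partial X$. Since $X$ is proper, $\overline{X}$ is compact, and after extracting subsequences we may additionally arrange that $\alpha_n^{-1}\cdot o\to\eta^+$ and $\beta_n^{-1}\cdot o\to\eta^-$ for some $\eta^\pm\in\partial X$. The target is now to extract a single $\gamma\in G$ whose orbit of $o$ is deep in $U^+$ and whose inverse orbit is deep in $U^-$, with arbitrarily large displacement at $o$. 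Any such $\gamma$ is hyperbolic and its axes join points of $U^+$ to points of $U^-$; by the openness principle those axes are rank~$1$, and hence $\gamma$ is a rank~$1$ element of $G$.

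To produce $\gamma$, I would examine the elements $\gamma_{n,m}:=\alpha_n\beta_m^{-1}\in G$, controlled by $\gamma_{n,m}\cdot o = \alpha_n(\beta_m^{-1}\cdot o)$ and $\gamma_{n,m}^{-1}\cdot o = \beta_m(\alpha_n^{-1}\cdot o)$. A contracting-projection argument along $\ell$ shows that for every $\zeta\in\partial X\setminus\{\eta^+\}$ one has $\alpha_n\zeta\to\xi^+$ as $n\to\infty$, and symmetrically $\beta_m\zeta\to\xi^-$ for every $\zeta\neq\eta^-$. Provided $\eta^+\neq\eta^-$, a diagonal extraction in $(n,m)$ places $\gamma_{n,m}\cdot o$ in $U^+$ and $\gamma_{n,m}^{-1}\cdot o$ in $U^-$ for suitable large $n,m$, while the displacement grows to infinity because the segment $[o,\gamma_{n,m}\cdot o]$ fellow-travels $\ell$ over an unboundedly long interval.

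The main obstacle is the degenerate case $\eta^+=\eta^-$, in which the attracting and repelling limits collide and the ping-pong collapses. This is handled by perturbation: replace $(\alpha_n)$ by $(\alpha_n\delta)$ for a fixed $\delta\in G$ chosen so that $\delta^{-1}\eta^+\neq\eta^-$. Such a $\delta$ exists because $L(G)=\partial X$ combined with the presence of the rank~$1$ endpoints $\xi^\pm$ prevents $G$ from fixing any boundary point globally: a global fixed point would confine every $G$-orbit to a single horoball, contradicting accumulation at a rank~$1$ endpoint on the opposite side. Once the perturbation is made, the ping-pong argument of the previous paragraph delivers a rank~$1$ element of $G$.
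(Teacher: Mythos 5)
There are two genuine gaps. The more serious one is your treatment of the degenerate case $\eta^+=\eta^-$. You rule it out by asserting that $G$ cannot fix a point of $\partial X$ because a global fixed point would confine every $G$-orbit to a single horoball. That is false: fixing $\xi\in\partial X$ only forces the Busemann function $b_\xi$ to transform by a homomorphism $G\to\R$ (the Busemann character), which need not vanish, so orbits are not confined to horoballs. Concretely, take $X=\mathbb{H}^2$ and let $G$ be the full stabiliser in $\Isom(X)$ of a boundary point $\eta_0$: this group acts transitively on $X$, hence has limit set all of $\partial X$, yet fixes $\eta_0$; moreover one can choose $\alpha_n,\beta_m\in G$ with $\alpha_n o\to\xi^+$, $\beta_m o\to\xi^-$ and $\alpha_n^{-1}o,\ \beta_m^{-1}o\to\eta_0$, so your degenerate case does occur and the perturbing element $\delta$ does not exist. (The proposition is still true there, but not by your argument.) The paper avoids this with a different case split: it first treats the case where $G$ fixes \emph{both} rank-one endpoints $\xi_+,\xi_-$ of the given axis, using the flat strip theorem to see that the union of all lines joining them is a bounded tube on which the unbounded group $G$ must contain a hyperbolic, hence rank-one, element; otherwise it may assume $G$ moves $\xi_+$ itself, which is exactly what the perturbation step needs.

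The second gap is that the dynamical heart of your ping-pong is asserted rather than proved. The claim that $\alpha_n\zeta\to\xi^+$ for every $\zeta\neq\eta^+$ is not a formal consequence of $\alpha_n o\to\xi^+$ and $\alpha_n^{-1}o\to\eta^+$ in a general proper \cat space (the $\pi$-convergence theorem only controls $\zeta$ at Tits distance greater than $\pi$ from $\eta^+$); what rescues it is precisely that $\xi^+$ is a rank-one endpoint, and that statement, in the uniform form you actually need, \emph{is} Ballmann's Lemmas III.3.2 and III.3.3. The uniformity matters: your diagonal extraction must place $\alpha_n(\beta_m^{-1}o)$ in $U^+$ and $\beta_m(\alpha_n^{-1}o)$ in $U^-$ simultaneously, and the naive pointwise limits pull in opposite directions ($n\gg m$ for the first, $m\gg n$ for the second), so you need $\alpha_n$ to map a whole $\overline{X}$-neighbourhood of $\eta^-$ into $U^+$ for all large $n$. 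Once you invoke Ballmann's lemmas you may as well use them the way the paper does: a single sequence with $g_n o\to\xi_+$ and $g_n^{-1}o\to\eta\neq\xi_+$ already yields that $g_n$ is rank one for large $n$; there is no need to steer the inverse orbit towards $\xi^-$ or to form the products $\alpha_n\beta_m^{-1}$.
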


\begin{proof}
Let $h \in \Isom(X)$ be rank~$1$ and $\xi_+, \xi_- \in \partial X$ be its attracting and repelling fixed points. Assume first that $G$ fixes both $\xi_+$ and $\xi_-$. Then the set of geodesic lines joining $\xi_+$ to $\xi_-$ is $G$-invariant. Since $h$ is rank~$1$, the union of those lines is within a bounded neighbourhood of one of them. As $G$ is unbounded, it follows that $G$ contains a hyperbolic isometry with an axis parallel to the $h$-axes. In particular that isometry must be a rank~$1$ isometry, as desired. 

Up to replacing $h$ by its inverse, we may now assume that $G$ does not fix $\xi_+$. 
Let $(g_n)$ be a sequence in $G$ such that $g_n x$ converges to $\xi_+$ for some (hence all) $x \in X$. Upon extracting, we may assume that $g_n^{-1}x$ converges to some point $\eta \in \partial X$. If $\eta = \xi_+$, then we choose an element $\gamma \in G$ such that $\gamma \xi_+ \neq \xi_+$. Then we have $\lim_n g_n \gamma x = \xi_+$ and $\lim_n (g_n \gamma)^{-1} x =    \gamma^{-1}\xi_+ \neq \xi_+$. By Lemmas III.3.2 and III.3.3 of \cite{Ballmann}, it follows that $g_n \gamma \in G$ is rank~$1$ for all sufficiently large $n$. 
\end{proof}

\begin{corollary}\label{cor:Lattices}
Let $X$ be a proper \cat space such that $\Isom(X)$ acts cocompactly, and contains rank~$1$ elements. Then every lattice $\Gamma \leq \Isom(X)$ contains rank~$1$ elements. In particular, every lattice  $\Gamma \leq \Isom(X)$ is acylindrically hyperbolic, unless $X$ is quasi-isometric to the real line. 
\end{corollary}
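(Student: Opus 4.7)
The plan is to apply Proposition~\ref{prop:LimitSet} to the lattice $\Gamma$, which requires showing that $\Gamma$ has limit set all of $\partial X$. Once this is established, $\Gamma$ contains a rank~$1$ element; the discreteness of $\Gamma$ combined with the properness of the $\Isom(X)$-action on the proper \cat space $X$ ensures that $\Gamma$ acts properly on $X$, and Proposition~\ref{prop:Sisto} then yields that $\Gamma$ is either acylindrically hyperbolic or virtually cyclic.

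The main obstacle is verifying that $\Gamma$ has full limit set. Since $\Isom(X)$ acts cocompactly, its orbits are cobounded and its limit set is all of $\partial X$. For a uniform lattice this transfers immediately to $\Gamma$, since $\Gamma$-orbits are themselves cobounded. For a non-uniform lattice, I would appeal to a standard result on lattices acting on proper \cat spaces (in the spirit of Caprace--Monod) stating that a finite-covolume subgroup of a cocompactly acting isometry group inherits the full limit set. Morally, a proper closed $\Gamma$-invariant subset $\Lambda \subsetneq \partial X$ would yield, via an ergodic argument on $\Gamma\backslash \Isom(X)$, an $\Isom(X)$-invariant proper subset of $\partial X$, contradicting that $\Isom(X)$ has full limit set.

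It remains to rule out virtual cyclicity whenever $X$ is not quasi-isometric to $\R$. If $\Gamma$ is virtually cyclic, its limit set reduces to the two fixed points $\xi_\pm$ of the hyperbolic generator of its finite-index cyclic subgroup, so $|\partial X| = 2$; let $\ell$ be an axis joining $\xi_-$ to $\xi_+$. Since $\Isom(X)$ can only permute $\{\xi_\pm\}$, the parallel set $P(\ell)\cong \ell \times Y$ provided by the flat strip theorem is $\Isom(X)$-invariant, and the continuous function $x \mapsto d(x,P(\ell))$ is $\Isom(X)$-invariant, hence bounded by cocompactness. Moreover $\partial P(\ell)\subseteq \partial X = \{\xi_\pm\}$ combined with the spherical join decomposition of the boundary of a product forces $\partial Y = \emptyset$, so $Y$ is compact (a proper \cat space without boundary points is bounded). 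Hence $X$ lies in a bounded neighbourhood of $P(\ell) \cong \ell \times Y$, which is quasi-isometric to $\R$, and the conclusion follows.
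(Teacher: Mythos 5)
Your proof is correct and follows essentially the same route as the paper: the key step is that a lattice in a cocompactly acting isometry group has full limit set, which the paper obtains directly from Caprace--Monod \cite[Prop.~2.9]{CaMoDiscrete} (the result you invoke), after which Propositions~\ref{prop:LimitSet} and~\ref{prop:Sisto} finish the argument. Your explicit parallel-set argument showing that the virtually cyclic case forces $X$ to be quasi-isometric to $\R$ is a correct elaboration of a point the paper leaves implicit.
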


\begin{proof}
By \cite[Prop.~2.9]{CaMoDiscrete}, the limit set of $\Gamma$ coincides with the limit set of $\Isom(X)$, which is full since $\Isom(X)$ acts cocompactly. Hence the desired conclusions follow from Proposition~\ref{prop:LimitSet} and Proposition~\ref{prop:Sisto}. 
\end{proof}

\begin{remark}\label{rem:KM}
Many examples of groups satisfying the hypotheses of Corollary~\ref{cor:Lattices} are provided by Kac--Moody groups over finite fields and their buildings. Indeed, if $X$ is the positive building of a Kac--Moody group $\mathcal G_A(F)$ over a finite field $F$, then $X$ is proper and $\Aut(X)$ acts cocompactly. Moreover, by \cite[Th.~1.1]{CapFuj-10-rnk1}, it contains rank~$1$ elements as soon as it is of irreducible, non-spherical and non-affine type.  If the field $F$ is large enough (i.e. of order larger than the size of the defining Cartan matrix $A$), then the subgroup $B_- \leq \mathcal G_A(F)$ is known to be a lattice in $\Aut(X)$, see \cite{RemyCRAS}. Further examples of lattices in $\Aut(X)$ can be constructed as centralisers of suitable involutory automorphisms of $\mathcal G_A(F)$, see \cite{GramlichMuhlherr}. By Corollary~\ref{cor:Lattices}, all these lattices are thus acylindrically hyperbolic. In particular they are not simple. This lies in sharp contrast with the Kac--Moody group $\mathcal G_A(F)$ itself, which is known to be mostly simple when $F$ is finite, see \cite{CapraceRemy}. 
\end{remark}

We finally record that when the space $X$ is a finite-dimensional, but not necessarily proper, \cat cube complex, then results from \cite{CapraceSageev} ensure the existence of strongly contracting elements for large families of automorphism groups of $X$. We recall that the action of a group $G \leq \Aut(X)$ on $X$ is  called \textbf{essential} if no $G$-orbit stays within a bounded distance from a half-space. 

\begin{corollary}\label{cor:CCC}
Let $X$ be a finite-dimensional (possibly non-proper) irreducible \cat cube complex and $G \leq \Aut(X)$ be a group acting essentially, without a fixed point in the visual boundary $\partial X$. If the $G$-action on $X$ is metrically proper, then $G$ is either virtually cyclic or acylindrically hyperbolic.
\end{corollary}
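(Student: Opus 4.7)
The plan is to verify the two hypotheses of Theorem~\ref{WPDstrcontAcylhyp} for a well-chosen element $h\in G$, with $X$ regarded as a geodesic metric space via its CAT($0$) metric.

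First, I would produce the element $h$ using the rank-rigidity machinery for CAT($0$) cube complexes of Caprace--Sageev \cite{CapraceSageev}. Under precisely the hypotheses that $X$ is finite-dimensional and irreducible, that $G$ acts essentially, and that $G$ has no global fixed point on the visual boundary $\partial X$, their double-skewering argument produces an element $h \in G$ (of infinite order) which skewers a pair of strongly separated hyperplanes of $X$. A standard feature of elements skewering strongly separated hyperplanes is that they are hyperbolic with positive translation length in the CAT($0$) metric, and admit an axis $\ell$ which is \emph{strongly contracting}: the nearest-point projection to $\ell$ sends any metric ball disjoint from $\ell$ to a set of diameter bounded by a constant depending only on the pair of strongly separated hyperplanes. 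This gives condition (ii) of Theorem~\ref{WPDstrcontAcylhyp} for $H = \langle h \rangle$.

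Next, condition (i), the WPD property, is essentially free: since the $G$-action on $X$ is metrically proper, for every $D > 0$ and every $x \in X$ the set
\[
 \setcon{g \in G}{d(x,gx) < D}
\]
is already finite. In particular, for any $m > 0$ the intersection
\[
 \setcon{g \in G}{d(x,gx) < D \text{ and } d(h^m x, g h^m x) < D}
\]
is finite, which is even stronger than what WPD requires. Applying Theorem~\ref{WPDstrcontAcylhyp} with this $h$ now yields that $G$ is either virtually cyclic or acylindrically hyperbolic.

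The only non-routine step is the appeal to Caprace--Sageev, and the subtle point is that the cube complex here is not assumed to be locally finite (indeed, the whole interest of this corollary is to cover non-proper cube complexes). The relevant rank-rigidity results from \cite{CapraceSageev}, however, were set up precisely in the non-proper setting; the existence of strongly separated skewerable hyperplane pairs is combinatorial and insensitive to local finiteness. Once $h$ is produced, the strong contraction of its axis in the CAT($0$) metric follows from the geometry around strongly separated hyperplanes (bounded projection onto half-space complements), and the rest of the argument is the direct application of Theorem~\ref{WPDstrcontAcylhyp} described above.
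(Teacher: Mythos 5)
Your proposal is correct and follows essentially the same route as the paper: the paper likewise invokes Caprace--Sageev (Theorem~6.3 of \cite{CapraceSageev}, i.e.\ the double-skewering of strongly separated hyperplanes under the irreducibility, essentiality and no-fixed-boundary-point hypotheses) to produce a strongly contracting isometry, observes that metric properness makes every hyperbolic element WPD, and concludes via Theorem~\ref{WPDstrcontAcylhyp}. Your additional remarks on non-local-finiteness and on why skewering strongly separated hyperplanes yields the contraction property are accurate but are subsumed in the citation the paper uses.
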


\begin{proof}
By \cite[Th.~6.3]{CapraceSageev}, the group $G$ contains a strongly contracting isometry. Since the action of $G$ is metrically proper, any hyperbolic element of $G$ is WPD. Therefore, the conclusion follows from Theorem~\ref{WPDstrcontAcylhyp}.
\end{proof}

\section{Regular isometries of buildings}\label{sect:regisoms}

We will now move on to groups acting on buildings that are not necessarily proper. The \cat realisation of a building is always a finite-dimensional \cat space \cite{Davis}, and the existence of strongly contracting isometries for large families of automorphism groups has been established in \cite[Th.~1.1]{CapFuj-10-rnk1}. The remaining difficulty is thus to ensure the existence of sufficiently many WPD elements, without assuming that the action of the ambient group is metrically proper. This is the content of Theorem~\ref{regularWPD}, whose proof is the focus of this chapter. 

Throughout, we let $(X, d)$ be the \cat realisation of a building of type $(W, S)$, where $S$ is finite. The nearest point projection (in the sense of  \cat geometry) to a closed convex subset $\mathcal C$ of $X$ is denoted by $\pi_{\mathcal C}$. We recall that there is also a \textbf{Weyl-distance} $\delta$ from pairs of chambers to $W$: we say $\delta(C,D)=w$ if there exists a minimal gallery $C=C_0,C_1,\dots,C_n=D$ in $X$  such that $C_{i-1}$ and $C_i$ are $s_i$-adjacent and $w=s_1\dots s_n$. The element $w \in W$ is then independent of the choice of the minimal gallery. We refer to \cite{AB08} and \cite{Davis} for general facts on the combinatorics and geometry of buildings.

\subsection{Regular points and regular lines}

A point $x \in X$ is called \textbf{regular} if it belongs to the interior of a chamber. Equivalently $x$ is regular if it is not contained in any wall of any apartment containing $x$. Given $\vareps>0$, we say that $x$ is \textbf{$\vareps$-regular} if the open ball $B(x;  \vareps)$ is entirely contained in the interior of a chamber. Equivalently $x$ is $\vareps$-regular if it lies at distance~$\geq \vareps$ from any wall of any apartment. 

Recall that a geodesic line   in $X$ is always contained in an apartment. A geodesic line $\ell$ is called \textbf{regular} if there exists an apartment $\mathcal A$ containing $\ell$ such that $\ell$ is not contained in a bounded neighbourhood of any wall of $\mathcal A$. It follows that $\ell$ contains regular points of $\mathcal A$. In particular $\ell$ meets the interior of some chamber $c$ of $\mathcal A$. Now, given any apartment $\mathcal A'$ containing $\ell$, the retraction $\rho_{\mathcal A, c}$ induces an isometry $\mathcal A' \to \mathcal A$ fixing $\ell$. This implies that $\ell$ is not contained in the bounded neighbourhood of any wall of $\mathcal A'$. Thus the defining property of $\ell$ holds for all apartments containing it.

Two geodesic lines are called \textbf{parallel} if they lie at bounded Hausdorff distance of each other or, equivalently, if they have the same endpoints in the visual boundary. The following fact is an important property of regular lines. Before stating it, we recall that a set of chambers in a building is called \textbf{combinatorially convex} if each minimal gallery joining two chambers of that set is entirely contained in the set. For example, apartments are combinatorially convex.

\begin{lemma}\label{lem:bi-infinite}
Let $\ell$ be a regular geodesic line contained in an apartment $\mathcal A$. For any geodesic line $\ell'$ parallel to $\ell$, the combinatorial convex hulls of $\ell$ and $\ell'$ coincide. In particular $\ell'$ is also contained in $\mathcal A$, and $\ell'$ is regular. 
\end{lemma}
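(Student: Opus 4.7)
The plan is to first establish that $\ell' \subseteq \mathcal A$; once this is known, the coincidence of the combinatorial hulls and the regularity of $\ell'$ will follow easily from standard facts about parallel lines in a Coxeter complex.

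For the main step, I would apply the Flat Strip Theorem to the parallel lines $\ell, \ell'$ in the CAT($0$) space $X$, obtaining an isometric embedding $\phi \colon \R \times [0,D] \to X$ with $\phi(\cdot, 0) = \ell$ and $\phi(\cdot, D) = \ell'$, where $D = d(\ell, \ell') \geq 0$. If $D = 0$, then $\ell = \ell'$ and there is nothing to prove, so assume $D > 0$. Let $T \subseteq \R$ be the set of parameters $t$ such that $\ell(t)$ lies in the interior of a chamber of $\mathcal A$; by regularity of $\ell$, the complement of $T$ is discrete in $\R$, so $T$ is dense. For each $t \in T$, a neighbourhood of $\ell(t)$ in $X$ is contained in the unique chamber through $\ell(t)$, so the tangent cone $T_{\ell(t)} X$ coincides with $T_{\ell(t)} \mathcal A$. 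Let $v_t$ denote the initial velocity of the transverse segment $s \mapsto \phi(t,s)$; then $v_t \in T_{\ell(t)} \mathcal A$. Since $\mathcal A$ is a complete convex subspace of $X$, the geodesic in $\mathcal A$ emanating from $\ell(t)$ with initial velocity $v_t$ extends to length $D$; by the uniqueness of geodesics between two points in a CAT($0$) space, it must coincide with $\phi(t, \cdot)$. Therefore $\ell'(t) = \phi(t, D)$ lies in $\mathcal A$. Since $T$ is dense in $\R$ and $\mathcal A$ is closed, $\ell' \subseteq \mathcal A$.

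With both $\ell$ and $\ell'$ lying in the apartment $\mathcal A$, they share endpoints in $\partial \mathcal A$. A wall $m$ of $\mathcal A$ is crossed by a geodesic line in $\mathcal A$ precisely when its two endpoints at infinity lie on opposite sides of $\partial m$; hence $\ell$ and $\ell'$ cross the same walls of $\mathcal A$ and lie on the same side of every wall they do not cross. Equivalently, they lie in exactly the same roots of $\mathcal A$, so their combinatorial convex hulls in $\mathcal A$ coincide. Since $\mathcal A$ is itself gallery-convex in $X$, these hulls also coincide with the combinatorial convex hulls of $\ell$ and $\ell'$ taken in $X$. Finally, $\ell'$ shares with $\ell$ its relationship to every wall of $\mathcal A$, so $\ell'$ is also regular.

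The main obstacle I anticipate is the rigorous identification of the transverse segments of the flat strip with geodesics of $\mathcal A$ having the same initial velocity. This relies on two ingredients: the tangent cone at a regular point of $\mathcal A$ is the same whether computed in $X$ or in $\mathcal A$, and geodesics in a CAT($0$) space are determined by their initial direction. Once these are in place, the combinatorial statements reduce to a transparent observation about parallel lines in a Coxeter complex.
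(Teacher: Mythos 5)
There is a genuine gap at the decisive step of your first paragraph. You argue that the transverse geodesic $\phi(t,\cdot)$ of the flat strip must coincide with a geodesic of $\mathcal A$ because the two share the same initial point and initial velocity, invoking ``uniqueness of geodesics between two points''. But uniqueness in a CAT($0$) space is uniqueness given the two \emph{endpoints}; a geodesic is \emph{not} determined by its initial point and initial direction, since geodesics can branch at singular points. In a building, $\phi(t,\cdot)$ does start inside a chamber of $\mathcal A$ (so its initial velocity is trivially tangent to $\mathcal A$), but as soon as it reaches a wall it may leave $\mathcal A$ and continue into a different apartment, and nothing in your argument prevents this. (A secondary issue: an apartment, being a Davis complex, need not be geodesically complete, so the comparison geodesic inside $\mathcal A$ need not even exist up to length $D$; ``complete convex subspace'' does not give extendability.) The gap is fatal rather than cosmetic: your argument uses regularity of $\ell$ only to produce a dense set of chamber-interior points on $\ell$, a property shared by many non-regular lines for which the lemma is false. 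For example, in a product of two thick trees $T_1\times T_2$ (with apartments $L_1\times L_2$), the line $\ell=L_1\times\{y_0\}$ with $y_0$ in the interior of an edge of $L_2$ has a cocountable set of regular points and transverse directions tangent to the apartment, yet its parallel lines $L_1\times\{y\}$ sweep out $L_1\times T_2$, which is contained in no apartment; here the transverse geodesics in $T_2$ branch off $L_2$ at the first vertex they meet.

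The paper circumvents exactly this difficulty by arguing globally on the strip rather than along individual transverse geodesics: it intersects the flat strip $R$ with the combinatorial hull $\mathcal C$ of $\ell$ (closed and CAT($0$)-convex, being an intersection of half-apartments), notes that $R\cap\mathcal C$ is a sub-strip $\ell\times J$, and shows $J$ is all of $I$ because otherwise the top boundary line $\ell''$ of the sub-strip, being parallel to the regular line $\ell$, cannot lie in a wall and hence contains a regular point of $\mathcal C$; such a point is interior to $\mathcal C$, contradicting that it lies on the boundary of $R\cap\mathcal C$. This is where the regularity of $\ell$ enters essentially. Your second paragraph, deducing equality of the combinatorial hulls once both lines are known to lie in $\mathcal A$, is sound in outline (though it too quietly uses regularity to rule out endpoints of $\ell$ lying in the boundary of a wall), but the first paragraph needs to be replaced by an argument of the above kind.
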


\begin{proof} 
Let $\mathcal C$ be the combinatorial convex hull of $\ell$. We have $\mathcal C \subseteq \mathcal A$. Moreover $\mathcal C$ is closed and convex in the CAT($0$)-sense, since $\mathcal C$ is the intersection of all half-apartments containing $\ell$.

By the Flat Strip Theorem \cite[$2.13$]{BH99} the convex hull of $\ell \cup \ell'$ is a Euclidean rectangle $R=\ell \times I$ for some geodesic segment $I$ joining a point $x$ on $\ell $ to its closest point projection $x'$ on $\ell'$. The set $R\cap \mathcal C$ is closed and convex and contains $\ell $. Therefore, it is of the form $\ell \times J$ where $J = [x, y]$ is some closed subinterval of $I$ containing $x$.

We next claim that $I=J$. Indeed, suppose the contrary. Let then $\ell''$ be the geodesic line parallel to $\ell$ and containing $y$. We now show that $\ell''$ is not contained in the union of all walls of $\mathcal A$. If there is some wall $M$ in $\mathcal A$ containing at least two points of $\ell''$ then $\ell''$ is contained in $M$ by \cite[Lem.~3.4]{NV02}, so as $\ell$ is regular, only countably many points of $\ell''$ lie on walls in $A$.

Up to replacing $x$ by a neighbouring point on $\ell$, we may therefore assume that $y$ does not lie on any wall in $\mathcal A$, and is thus a regular point.  Therefore there is some small ball around $y$ contained in a chamber, hence in $\mathcal C$, contradicting the fact that $y$ lies in the boundary of $R\cap \mathcal C$. 

The claim implies that $\ell' \subset \mathcal C$, so that $\mathcal C' \subseteq \mathcal C$ and $\ell'$ is regular. By symmetry we also have $\mathcal C \subseteq \mathcal C'$, and we are done.
\end{proof}

\subsection{Regular automorphisms and combinatorial hulls}

An automorphism $h \in \Aut(X)$ is called \textbf{regular} if it is hyperbolic and has an axis which is a regular line. In that case all its axes are regular by Lemma~\ref{lem:bi-infinite}. We define the \textbf{combinatorial hull} $\mathcal{H}$ of $h$ as the combinatorial convex hull of some fixed axis $\ell$ of $h$. By Lemma~\ref{lem:bi-infinite}, the combinatorial hull does not depend on the choice of the axis $\ell$, and it contains all lines parallel to $\ell$, in particular all $h$-axes. 

The main result of this section is that combinatorial hulls of regular elements enjoy a strong `attracting property'.

\begin{proposition}\label{prop:regularSegment}
Let $h \in \Aut(X)$ be a regular element and $\mathcal H$ be its combinatorial hull.  Given $C \geq 0$, there exists $T >0$ such that the following holds for all $x, y \in X$:

If $d(x, \ell), d(y, \ell) < C$ and $d(x, y)>2T$, then $[x, y] \cap \mathcal H$ contains a geodesic segment $[x',y']$ such that $d(x,x'),d(y,y')<T$. 
\end{proposition}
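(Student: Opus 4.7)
My plan is to combine the \cat-convexity of $\mathcal H$ with the $\la h \ra$-periodicity along $\ell$, together with a comparison-type argument, to show that the geodesic $[x, y]$ enters $\mathcal H$ deeply. Since $\mathcal H$ is an intersection of closed half-apartments of an apartment $\mathcal A$ containing $\ell$, it is closed and \cat-convex in $X$. The $1$-Lipschitz nearest-point projection $\pi_\mathcal{H} \colon X \to \mathcal H$ satisfies $d(x, \pi_\mathcal{H}(x)) \leq d(x, \ell) < C$, since $\ell \subseteq \mathcal H$, and analogously for $y$. By convexity of the distance-to-$\mathcal H$ function along $[x, y]$, the whole geodesic stays within distance $C$ of $\mathcal H$ throughout; this alone only gives $[x, y] \subseteq \mathcal N_C(\mathcal H)$, however, so further input is needed for actual entry into $\mathcal H$.

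For this, I would exploit the cocompact action of $\la h \ra$ on $\ell$ by translations of length $\tau > 0$: by regularity of $\ell$, a fundamental domain meets only finitely many walls of $\mathcal A$. This yields uniform constants $\vareps_0, D_0 > 0$ such that every subsegment of $\ell$ of length at least $D_0$ contains an $\vareps_0$-regular point, that is, a point $r$ whose open $\vareps_0$-ball in $X$ is contained in a single chamber $c_r \subseteq \mathcal H$. Setting $p = \pi_\ell(x)$ and $q = \pi_\ell(y)$, the segment $[p, q] \subseteq \ell$ has length exceeding $d(x, y) - 2C > 2T - 2C$, so for $R$ depending on $C$ and $\vareps_0$ one can select $\vareps_0$-regular points $r_1, r_2$ at distances lying in $[R, R + D_0]$ from $p$ and from $q$ respectively.

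The core step is to show that $[x, y]$ passes within $\vareps_0$ of each $r_i$ for $R$ large enough, and hence enters each chamber $c_{r_i} \subseteq \mathcal H$. Granting this, \cat-convexity of $\mathcal H$ implies that $[x, y] \cap \mathcal H$ is a single closed subsegment $[x', y']$ containing these passages (any two points of the intersection are joined by a subsegment lying in both $[x, y]$ and $\mathcal H$), and the triangle inequality gives $d(x, x') \leq d(x, r_1) + \vareps_0 \leq C + R + D_0 + \vareps_0$, with the symmetric bound for $y'$; thus $T = C + R + D_0 + \vareps_0$ suffices.

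I expect the main obstacle to be the core step. Naive Euclidean intuition (points at equal height $C$ above $\ell$) suggests the geodesic can stay at distance $C$ from $\ell$ in the middle, so closeness to $\ell$ is not automatic. A uniform argument likely splits into the affine case, where regularity of $\ell$ forces $\mathcal H = \mathcal A$ so that entry into $\mathcal H$ reduces to entry into $\mathcal A$ (handled via retractions onto $\mathcal A$), and the non-affine case, where a \cat-comparison in the quadrilateral $(x, p, q, y)$ combined with non-parallelism of $\ell$ to walls forces the required convergence of $[x, y]$ to $\ell$. Producing such an argument uniformly is the key technical task.
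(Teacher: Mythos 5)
The gap is exactly where you predicted it: the ``core step'' is missing, and the route you sketch for it does not work. Showing that $[x,y]$ passes within $\vareps_0$ of regular points \emph{on $\ell$ itself} is false in general: in the affine case $\mathcal H$ is a whole apartment, and even in non-affine (or reducible, or non-thick) situations $\mathcal H$ may contain flat strips through $\ell$, so the midportion of $[x,y]$ can stay at distance $\approx C$ from $\ell$ forever. Your proposed repair --- a case split into affine and non-affine, with ``convergence of $[x,y]$ to $\ell$'' in the latter --- is not available at this level of generality (the proposition assumes nothing about the type of the building), and in any case no argument is given for either branch. What is actually needed, and what the paper supplies, is a statement about entry into $\mathcal H$ rather than proximity to $\ell$: Lemmas~\ref{lem:semi-infinite} and~\ref{lem:semi-infinite2} show that every geodesic ray pointing to $\xi_-$ from a point at distance $<C$ from $\ell$ lies in $\mathcal H$ after a \emph{uniform} time $T(C)$. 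The proof of that uniform entry is the real content: it is a compactness/limiting argument inside the (proper) apartment containing $\mathcal H$, using the $h$-dynamics (translating $\rho(n)$ by $h^n$ toward $\xi_+$, commuting $h$ with $\pi_{\mathcal H}$), convexity of the distance to $\mathcal H$, and the fact that limits of such rays are lines parallel to $\ell$, hence regular by Lemma~\ref{lem:bi-infinite}, hence contain points with a whole ball inside $\mathcal H$. None of this appears in your outline.

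Two smaller corrections. First, your equidistribution lemma should be stated for arbitrary segments of $\mathcal A$ lying in the $C$-neighbourhood of $\ell$ (as in Lemma~\ref{lem:Apt}), not just for subsegments of $\ell$: the regular points one ultimately uses lie on the rays $[x,\xi_-)\cap\mathcal H$, not on $\ell$. Second, once uniform entry of the rays is known, the transfer to the segment $[x,y]$ is done by a \cat comparison in the triangle $(x, y, y')$ where $y'$ is the projection of $y$ to the ray $[x,\xi_-)$ (so that the side $[y,y']$ has uniformly bounded length because $d(y,\ell)<C$); this pushes an $\vareps$-regular point of the ray to within $\vareps/2$ of $[x,y]$, which then lands in $\mathcal H$. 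Your concluding assembly (convexity of $\mathcal H$ forcing $[x,y]\cap\mathcal H$ to be a single subsegment, and the resulting bound on $T$) is correct, but the proposal as it stands does not prove the proposition.
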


The remainder of the section deals with the proof of Proposition \ref{prop:regularSegment}, which we build up through a collection of lemmas studying the properties of geodesics neighbouring a regular axis. Throughout this it will be important to keep track of regular points. 
The first relevant result was Lemma~\ref{lem:bi-infinite} above on geodesic lines. We will now move on to   geodesic rays.

\medskip
The following notation is fixed for the rest of this section. We let $h \in \Aut(X)$ be a regular element with axis $\ell$,  and $\mathcal H$ be its combinatorial hull. Then $\mathcal H$ is combinatorially convex, hence it is an intersection of half-apartments, and is thus also closed and convex for the \cat metric. This will be used frequently below, without further comment. We denote by $\xi_+$ (resp. $\xi_-$) the attracting (resp. repelling) fixed point of $h$ in the visual boundary $\partial X$. 

\begin{lemma}\label{lem:semi-infinite}
Let $\rho \colon \R_+ \to X$ be a geodesic ray pointing to $\xi_-$. Then there exists a constant $T\geq 0$ such that $\rho(t) \in \mathcal H$ for all $t \geq T$. 
\end{lemma}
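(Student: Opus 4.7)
My strategy is to combine the \cat fellow-travelling of asymptotic rays with the combinatorial convexity of $\mathcal H$, then exploit the translation action of $h$ along $\ell$ to extract a bi-infinite geodesic parallel to $\ell$ and apply Lemma~\ref{lem:bi-infinite}.

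First, since $\rho$ and the sub-ray $\ell_- := \ell|_{(-\infty,0]} \subset \mathcal H$ are asymptotic geodesic rays in the \cat space $X$ both pointing to $\xi_-$, standard fellow-travelling of asymptotic rays gives $d(\rho(t),\ell) \le d(\rho(0),\ell(0))$ for all $t \ge 0$. Hence $f(t) := d(\rho(t),\mathcal H)$ is a bounded convex non-negative function on $[0,\infty)$, so it is non-increasing and converges to some $D \ge 0$.

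Next I argue $D = 0$ using the translation action. Since $h$ preserves $\ell$ it also stabilises $\mathcal H$. Writing $\tau > 0$ for the translation length of $h$, set $\sigma_n(t) := h^n\rho(t+n\tau)$, a geodesic (isometric to a sub-arc of $\rho$) defined on $[-n\tau,\infty)$ and pointing to $\xi_-$ at $+\infty$. One checks that $\sigma_n(0) = h^n\rho(n\tau)$ lies in a bounded neighbourhood of $\ell$ and at Busemann level $\beta_{\xi_-}(\rho(0))$, independent of $n$. Passing to a limit (either via an ultralimit, or by first retracting to an apartment containing $\ell$, which is locally compact in its Davis realisation), the $\sigma_n$ converge to a bi-infinite \cat geodesic $\ell'$ with endpoints $\xi_\pm$. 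Since $\ell$ and $\ell'$ share both endpoints at infinity, $\ell'$ is parallel to $\ell$, so Lemma~\ref{lem:bi-infinite} gives $\ell' \subset \mathcal H$. On the other hand $d(\ell'(t),\mathcal H) = \lim_n d(\sigma_n(t),\mathcal H) = \lim_n d(\rho(t+n\tau),\mathcal H) = D$, forcing $D = 0$.

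To upgrade $D = 0$ to $\rho(t) \in \mathcal H$ for all sufficiently large $t$: since $\ell' \subset \mathcal H$ is parallel to $\ell$, I choose $t_0$ with $\ell'(t_0)$ in the interior of a chamber $C \subset \mathcal H$. Convergence $\sigma_n(t_0) \to \ell'(t_0)$ then places $\sigma_n(t_0) \in C$ for all large $n$, which unpacks to $\rho(t_0 + n\tau) \in \mathcal H$. The \cat convexity of $\mathcal H$ now forces the geodesic segments $[\rho(t_0 + n\tau),\rho(t_0 + (n+1)\tau)]$ to lie in $\mathcal H$ for each such $n$, yielding $\rho(t) \in \mathcal H$ for all $t$ sufficiently large.

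The main obstacle is the convergence step in the second paragraph: $X$ need not be locally compact in the Kac--Moody applications of primary interest, so extracting the limit geodesic from $(\sigma_n)$ requires either an ultralimit argument or the preliminary retraction to an apartment mentioned above, after which one must also verify that the limit computed there lifts faithfully back to $X$.
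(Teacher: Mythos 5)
Your overall strategy -- translate by $h^{n}$ to produce a sequence of geodesics anchored near a basepoint of $\ell$, extract a limit line $\ell'$ parallel to $\ell$, invoke Lemma~\ref{lem:bi-infinite} to get $\ell'\subset\mathcal H$, and then use a regular point of $\ell'$ (an interior point of a chamber of $\mathcal H$) to force the ray into $\mathcal H$ at a sequence of times, finishing by convexity of $\mathcal H$ -- is exactly the right circle of ideas, and the first and last paragraphs are fine. But the step you yourself flag as "the main obstacle" is a genuine gap, not a technicality, and neither of your proposed workarounds closes it. The space $X$ is not proper (this is the whole point of the Kac--Moody application), and even the bounded neighbourhood of $\ell$ in $X$ can contain infinitely many chambers, so Arzel\`a--Ascoli does not apply to $(\sigma_n)$. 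An ultralimit of the $\sigma_n$ lives in an asymptotic cone / ultralimit of $X$, where neither "parallel to $\ell$", nor Lemma~\ref{lem:bi-infinite}, nor the statement $\ell'\subset\mathcal H$ makes sense, and it gives no pointwise convergence $\sigma_n(t_0)\to\ell'(t_0)$ in $X$, which your third paragraph needs. Retracting to an apartment $\mathcal A\supset\ell$ is also insufficient as stated: the retraction is $1$-Lipschitz but not isometric off $\mathcal A$, so for the retracted limit you only get $d(\ell'(t),\mathcal H)\le D$, which does not force $D=0$, and again the pointwise convergence in $X$ used in paragraph three is lost.

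The paper's device for exactly this difficulty is to push the translated points into $\mathcal H$ \emph{before} taking limits: set $z_n=\pi_{\mathcal H}(h^n\rho(n))$, note that $\pi_{\mathcal H}$ commutes with $h$ so that $d(h^n\rho(n),z_n)=d(\rho(n),\pi_{\mathcal H}\rho(n))<D+\vareps$ eventually, and extract the limit of the rays $[z_n,\xi_-)$ \emph{inside} $\mathcal H$, which is a closed subset of a single apartment and hence proper. The limit line $\ell'\subset\mathcal H$ is parallel to $\ell$, hence regular, so it has a point $q$ with $B(q,3\vareps)\subset\mathcal H$; convexity of the distance between the asymptotic rays $[h^n\rho(n),\xi_-)$ and $[z_n,\xi_-)$ then transports this ball back to the original ray and yields the contradiction with $\inf_t d(\rho(t),\mathcal H)=D$. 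If you replace your limiting step by this projection argument, the rest of your proof goes through; as written, the existence of $\ell'$ as an actual geodesic in $X$ that the $\sigma_n$ converge to pointwise is unjustified.
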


\begin{proof}
Assume the contrary. Then $\rho(t) \not \in \mathcal H$ for any $t \in \R_+$. Set 
$$D = \inf_{t \geq 0} d(\rho(t), \mathcal H).$$
Since $\mathcal H$  is \cat-convex, the distance to $\mathcal H$ is a convex function on $X$, see \cite[Cor.~II.2.5]{BH99}. In particular the map $\R_+ \to \R_+ : t \mapsto d(\rho(t), \mathcal H)$ is non-increasing. 

Upon replacing $h$ by a suitable positive power of itself, we may assume that the translation length of $h$ is larger than $2$. This implies that the sequence $(h^n \rho(n))_{n \geq 0}$ converges to $\xi_+$. Denote by $\pi_{\mathcal H}$ the nearest point projection to $\mathcal H$ and set 
$$z_n = \pi_{\mathcal H} h^n \rho(n).$$
Since $\mathcal H$ is a closed subset of an apartment, it is a proper space. It follows that, after extracting a subsequence,  the sequence of geodesic rays $[z_n, \rho_-)$ converges uniformly on compact sets to a geodesic line $\ell' \subset \mathcal H$ which is parallel to $\ell$. In particular $\ell'$ is regular. Pick a regular point $q \in \ell'$. Then there exists $\vareps >0$ such that the ball $B(q, 3\vareps)$ is entirely contained in $\mathcal H$. 

By construction, there is $N \geq 0$ such that for all $n \geq N$, there is a point $q_n \in [z_n, \xi_-)$ with $d(q_n, q) < \vareps$. Moreover, since the map $t \mapsto d(\rho(t), \mathcal H)$ is non-increasing, we find some $M \geq 0$ such that for all $n \geq M$, $d( \rho(n),    \pi_{\mathcal H} \rho(n))
 < D + \vareps$. The automorphism $h$ commutes with the projection $\pi_{\mathcal H}$, since $h$ stabilises $\mathcal H$, so we deduce that
\begin{align*}
d(h^n \rho(n), z_n) 
& = d(h^n \rho(n), \pi_{\mathcal H} h^n \rho(n))\\
&= d(h^n \rho(n), h^n  \pi_{\mathcal H} \rho(n))\\
&= d(  \rho(n),    \pi_{\mathcal H} \rho(n))\\
& < D + \vareps
\end{align*}
for all $n \geq M$. In particular, for $n \geq \max \{M, N\}$, there is a point $q'_n \in [h^n \rho(n), \xi_-)$ with $d(q'_n, q_n)< D + \vareps$. We have $d(q'_n, q) \leq d(q'_n, q_n) + d(q_n, q) < D + 2\vareps$. Since $B(q, 3\vareps) \subset \mathcal H$, we infer that the geodesic segment $[q'_n, q]$ contains points of $\mathcal H$ at distance $< D-\vareps$ from $q'_n$. On the other hand, we have
$$\inf_{t \geq 0} d(h^n \rho(n +t), \mathcal H) = \inf_{t \geq 0} d(\rho(n+t), \mathcal H) = D,$$
so that $d(q'_n, \mathcal H) \geq D$, a contradiction.
\end{proof}

Since the geodesic line $\ell$ is periodic and regular, it is easy to see that any geodesic ray in an apartment $\mathcal A$ containing $\ell$ that remains in a bounded neighbourhood of $\ell$, is itself regular. The following subsidiary fact provides a quantitative version of that fact, ensuring in particular that for a fixed small $\vareps >0$, the collection of $\vareps$-regular points on that geodesic ray is equidistributed.

\begin{lemma}\label{lem:Apt}
Let $\mathcal A$ be an apartment containing $\ell$. For any  $C> 0$, there exist $\vareps \in (0, 1)$ and $L \geq 1$ such that the following holds for all $x, y \in \mathcal A$ :

If $d(x, \ell), d(y, \ell) < C$ and $d(x, y) \geq L$, then there is $z \in [x, y]$ which is $\vareps$-regular. 
\end{lemma}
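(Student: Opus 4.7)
The plan is to argue by contradiction: I will produce a bi-infinite geodesic in $\mathcal A$ that is simultaneously parallel to $\ell$ (hence regular by Lemma~\ref{lem:bi-infinite}) and contained in a single wall of $\mathcal A$, which is the desired contradiction.

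Suppose no such $\varepsilon$ and $L$ exist. Then for every $n \geq 1$ we find $x_n, y_n \in \mathcal A$ with $d(x_n, \ell), d(y_n, \ell) < C$ and $d(x_n, y_n) \geq n$ such that every point of $\gamma_n := [x_n, y_n]$ lies within $1/n$ of some wall. First I would normalize using $h$: letting $\tau$ denote its translation length and $m_n$ the midpoint of $\gamma_n$, I replace $(x_n, y_n)$ by $(h^{-k_n} x_n, h^{-k_n} y_n)$ for a suitable integer $k_n$, so that the nearest-point projection of $m_n$ onto $\ell$ lies in the compact fundamental segment $\ell([0, \tau])$. Since $h$ preserves $\ell$, the set of walls of $\mathcal A$ and the convex set $N_C(\ell)$, this preserves all of the hypotheses while placing each $m_n$ in a fixed bounded region.

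Next, since $\mathcal A$ is a proper \cat space (with $|S|$ finite, its Davis-type realization is locally finite with compact cells), I pass to a subsequence with $m_n \to m_\infty$. Parametrizing $\gamma_n$ by arc length with $\gamma_n(0) = m_n$, and using $d(x_n, m_n) = d(y_n, m_n) \to \infty$, Arzel\`a--Ascoli gives a further extraction along which $\gamma_n$ converges, uniformly on compact sets, to a bi-infinite geodesic $\gamma_\infty \colon \R \to \mathcal A$ through $m_\infty$, lying inside the closed convex set $N_C(\ell)$. Because the nearest-point projection onto $\ell$ is $1$-Lipschitz and $\gamma_\infty$ is unbounded in both directions, the ideal endpoints of $\gamma_\infty$ must coincide with those of $\ell$. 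Hence $\gamma_\infty$ is parallel to $\ell$, and Lemma~\ref{lem:bi-infinite} yields that $\gamma_\infty$ is itself a regular line.

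The hard part, and the main obstacle, is to force $\gamma_\infty$ into a single wall of $\mathcal A$. For each $t \in \R$, the point $\gamma_n(t)$ lies within $1/n$ of some wall $M_n^{(t)}$, which must meet any fixed bounded neighborhood of $\gamma_\infty(t)$. Since every bounded subset of $\mathcal A$ meets only finitely many walls, a $t$-dependent extraction gives $M_n^{(t)} = M^{(t)}$ eventually, so $\gamma_\infty(t) \in M^{(t)}$. Thus $\gamma_\infty$ is contained in the locally finite union of all walls of $\mathcal A$. Baire's theorem, applied on any compact subinterval of $\R$, produces a wall $M$ such that $\gamma_\infty^{-1}(M)$ has nonempty interior and hence contains at least two points of $\gamma_\infty$; the argument from the proof of Lemma~\ref{lem:bi-infinite}, based on \cite[Lem.~3.4]{NV02}, then forces $\gamma_\infty \subset M$. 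This contradicts the regularity of $\gamma_\infty$, completing the proof.
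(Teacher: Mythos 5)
Your proposal is correct and follows essentially the same route as the paper's proof: argue by contradiction, use the $\langle h\rangle$-action to pin the midpoints of the segments $[x_n,y_n]$ in a compact set, use properness of $\mathcal A$ to extract a limit geodesic line parallel to $\ell$ all of whose points lie on walls, and then use local finiteness of the wall arrangement together with \cite[Lem.~3.4]{NV02} to place that line inside a single wall, contradicting regularity. The only cosmetic difference is that you invoke Baire's theorem where the paper simply observes that a compact segment meets only finitely many walls, so two of its (uncountably many) points must lie on the same wall; both yield the same conclusion.
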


\begin{proof}
Assume the contrary. Then there exist sequences $(x_n)$ and $(y_n)$ in $\mathcal A$ such that $d(x_n, \ell), d(y_n, \ell) < C$, $d(x_n, y_n) \geq n$ and every point $z \in [x_n, y_n]$ is $1/n$-close to a wall of $\mathcal A$. Using the fact that the cyclic group $\langle h \rangle $ acts cocompactly on $\ell$, we may assume that the midpoint of $[x_n, y_n]$ remains in a bounded neighbourhood of a base point $p \in \ell$. Since $\mathcal A$ is proper, upon extracting a subsequence we may assume that $[x_n, y_n]$ converges to a geodesic line $\ell'$ which is parallel to $\ell$. By construction every point of $[x_n, y_n]$ is $1/n$-close to  some wall of $\mathcal A$. It follows that $\ell'$ does not contain any regular point. In other words  every point of $\ell'$ lies on a wall of $\mathcal A$. Since the collection of walls of $\mathcal A$ is locally finite, we see that two points of $\ell'$ must lie on the same wall. By \cite[Lem.~3.4]{NV02}, this implies that $\ell'$ is entirely contained in a wall, which contradicts the assumption that $\ell$ is regular. 
\end{proof}

Our next goal is to show that the constant $T$ in Lemma~\ref{lem:semi-infinite} is bounded above by a constant which depends only on the distance from $\rho(0)$ to $\ell$. This is achieved by the following. 

\begin{lemma}\label{lem:semi-infinite2}
Let $p \in \ell$. For each $C \geq 0$, there is some $T=T(C) \geq 0$ such that the following holds:  

For any geodesic ray $\rho \colon \R_+ \to X$ pointing to $\xi_-$ with $d(\rho(0), p) < C$, we have $\rho(t) \in \mathcal H$ for all $t \geq T$. 
\end{lemma}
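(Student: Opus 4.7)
The plan is to argue by contradiction, combining the convexity arguments underlying Lemma~\ref{lem:semi-infinite} with the $\langle h\rangle$-action on $\ell$ and the parallelism picture of Lemma~\ref{lem:bi-infinite}. Suppose no such $T(C)$ works. For each $n$ there exists a ray $\rho_n\colon\R_+\to X$ pointing to $\xi_-$ with $d(\rho_n(0),p)<C$ and a time $t_n\to\infty$ with $\rho_n(t_n)\notin\mathcal H$. The same convexity argument used in the proof of Lemma~\ref{lem:semi-infinite} shows that $t\mapsto d(\rho_n(t),\mathcal H)$ is convex, non-increasing, and eventually zero, so that $\rho_n(t)\notin\mathcal H$ for all $t\in[0,t_n]$. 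Comparing $\rho_n$ with the asymptotic ray $\rho_\infty:=[p,\xi_-)\subseteq\ell$ gives $d(\rho_n(t),\ell)<C$ for every $t\geq 0$.

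Next I would exploit the $\langle h\rangle$-action to bring everything into a bounded region. Since $h$ preserves $\mathcal H$ and translates $\ell$ by $\tau>0$, pick integers $k_n$ with $\tilde\rho_n:=h^{k_n}\rho_n$ satisfying $\tilde\rho_n(t_n)\in B(p,\tau+C)$. A Busemann-function computation along $\ell$ shows that $k_n$ is of order $t_n/\tau$, so the basepoint $\tilde\rho_n(0)=h^{k_n}\rho_n(0)$ recedes along the $\xi_+$-direction at rate comparable to $t_n$. Reparametrise via $\sigma_n(s):=\tilde\rho_n(t_n+s)$ for $s\in[-t_n,\infty)$. Then $\sigma_n(0)\in B(p,\tau+C)\setminus\mathcal H$, the forward end of $\sigma_n$ heads to $\xi_-$, and $\sigma_n(-t_n)=h^{k_n}\rho_n(0)$ converges to $\xi_+$ in the visual boundary as $n\to\infty$.

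I would then extract a subsequential limit geodesic $\sigma_\infty\colon\R\to X$ of the $\sigma_n$. By the boundary behaviour above, $\sigma_\infty$ is a bi-infinite geodesic with endpoints $\xi_+$ and $\xi_-$, hence parallel to $\ell$. By Lemma~\ref{lem:bi-infinite} this forces $\sigma_\infty\subseteq\mathcal H$, and $\sigma_\infty$ is itself regular. Pick a regular point $q=\sigma_\infty(s_0)$ with $s_0<0$ and $\vareps>0$ with $B(q,3\vareps)\subseteq\mathcal H$. For $n$ large, the pointwise convergence $\sigma_n(s_0)\to q$ yields $\sigma_n(s_0)\in B(q,\vareps)\subseteq\mathcal H$. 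But $s_0\leq 0$ implies $t_n+s_0\leq t_n$, so $\sigma_n(s_0)\notin\mathcal H$, a contradiction.

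The main obstacle is that $X$ need not be proper (in the application to orthogonal forms over $\R$, stabilisers of singular points are uncountable compact groups), so extracting the limit $\sigma_\infty$ in $X$ is not automatic. The remedy is to use the \cat projection $\pi_{\mathcal A}\colon X\to\mathcal A$ onto a fixed apartment $\mathcal A$ containing $\ell$, which is proper: the points $\pi_{\mathcal A}(\sigma_n(0))$ lie in the compact set $\mathcal A\cap B(p,2(\tau+C))$, so up to subsequence they converge to some $y_\infty\in\mathcal A$, yielding a candidate limit line $\sigma_\infty\subset\mathcal A$ parallel to $\ell$. Upgrading this apartment-level convergence to pointwise convergence of $\sigma_n$ in $X$ on bounded intervals follows from the uniform bound $d(\sigma_n(s),\ell)<C$ together with the uniqueness of the direction of $\sigma_n$ at $\sigma_n(0)$ (opposite to the forward direction, which lives in $\mathcal A$) and the convexity of $\mathcal H$.
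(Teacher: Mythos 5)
Your overall strategy---translate by $h^{k_n}$ to bring the bad points back near $p$, extract a limit geodesic parallel to $\ell$, and use an $\vareps$-regular point on that limit to force $\sigma_n(s_0)$ into $\mathcal H$---is close in spirit to the paper's proof of Lemma~\ref{lem:semi-infinite}, but it breaks down at exactly the step you flag: extracting the limit $\sigma_\infty$. The final contradiction needs the \emph{actual} points $\sigma_n(s_0)$ to converge in $X$ to a regular point $q$ of $\mathcal H$, so that $\sigma_n(s_0)\in B(q,\vareps)\subset\mathcal H$ for large $n$. But these points lie outside $\mathcal H$, in a ball of $X$ that is not compact when the building is not locally finite (the relevant case for orthogonal forms over $\R$), and there is no reason for them to subconverge. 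Your proposed remedy does not close this gap: convergence of $\pi_{\mathcal A}(\sigma_n(0))$ in the proper apartment $\mathcal A$ only controls $\sigma_n(s)$ up to an error of size $d(\sigma_n(s),\mathcal A)$, which is bounded by $C$ but not small, so it cannot place $\sigma_n(s_0)$ inside $B(q,\vareps)$; moreover $y_\infty$ need not lie on any line parallel to $\ell$, and ``uniqueness of the direction of $\sigma_n$ at $\sigma_n(0)$'' does not produce precompactness of $\{\sigma_n(s)\}_n$ in $X$. A further worry: any limit taken inside the proper set $\mathcal H$ of the portions of $\sigma_n$ that have already entered $\mathcal H$ may escape towards $\xi_-$, because the entry time is precisely the quantity whose uniform boundedness you are trying to prove.

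The paper's proof avoids all limit-taking in $X$. Writing $x_n=\rho_n(0)$ and letting $y_n$ be the entry point of $[x_n,\xi_-)$ into $\mathcal H$ (so $d(x_n,y_n)>n$ by Lemma~\ref{lem:semi-infinite}), it sets $x_n'=\pi_{\mathcal H}(x_n)$, notes $d(x_n,x_n')\le d(x_n,p)<C$, and applies Lemma~\ref{lem:Apt} inside $\mathcal H$ to find an $\vareps$-regular point $q_n\in[x_n',y_n]$ with $L\le d(y_n,q_n)\le 2L$. The \cat comparison for the triangle $(x_n,x_n',y_n)$, one of whose sides has length $<C$ while $d(x_n,y_n)\to\infty$, places $q_n$ within $\vareps/2$ of a point $q_n'\in[x_n,y_n]$; the $\vareps$-regularity of $q_n$ forces $q_n'\in\mathcal H$, hence $q_n'=y_n$, contradicting $d(y_n,q_n)\ge L\ge 1$. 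If you want to salvage your argument, replace the limit of the $\sigma_n$ by this projection-plus-comparison device; as written, the proof has a genuine gap.
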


\begin{proof}
Assume the contrary. Then there is $C \geq 0$ and, for each $n >0$, a geodesic ray $\rho_n \colon \R_+ \to X$ pointing to $\xi_-$ with $d(\rho_n(0), p)< C$ such that $\rho_n(t) \not \in \mathcal H$ for all $t \in [0, n]$. Let $x_n = \rho_n(0)$. By Lemma~\ref{lem:semi-infinite}, the intersection $[x_n, \xi_-) \cap \mathcal H$ is a ray, say $[y_n, \xi_-)$. Hence $d(x_n, y_n) > n$. 

Set $x'_n = \pi_{\mathcal H}(x_n)$. Recall that $\mathcal H$ is a subset of an apartment. Applying Lemma~\ref{lem:Apt}, we find $\vareps\in (0, 1)$ 	and $L \geq 1$ such that for all $n > 2L$,  the geodesic segment $[x'_n, y_n]$ contains an $\vareps$-regular point $q_n$ such that $L \leq d(y_n, q_n) \leq 2L$. In particular $B(q_n, \vareps) \subset \mathcal H$. 

We next observe that  $d(x_n, x'_n) \leq d(x_n, p) < C$. Therefore, by the \cat inequality for the geodesic triangle $(x_n, x'_n, y_n)$, we see that for $n$ sufficiently large, the point $q_n$ is $\vareps/2$-close to a point $q'_n$ belonging to the segment $ [x_n, y_n]$.  Since $q_n$ is $\vareps$-regular, it follows that $q'_n \in \mathcal H$, whence $q'_n = y_n$ since by construction we have $[x_n, y_n] \cap \mathcal H = \{y_n\}$. This implies that $d(y_n, q_n) \leq \vareps/2$, so that $L \leq \vareps/2<1$, a contradiction. 
\end{proof}

We are now ready to give the proof of Proposition~\ref{prop:regularSegment}.

\begin{proof}[Proof of Proposition~\ref{prop:regularSegment}]
Assume the contrary. Then there is a sequence $M_n> n$, and two sequences $(x_n), (y_n)$ in $X$ such that $d(x_n, \ell), d(y_n, \ell) < C$ and $d(x_n, y_n)= M_n$ for all $n$ such that $[x_n, y_n] \cap \mathcal H$ is a geodesic segment of length at most~$M_n-n$.

The cyclic group $\langle h \rangle $ acts cocompactly on $\ell$. Therefore, there is no loss of generality in assuming that $(x_n)$ remains in a bounded neighbourhood of some base point $p \in \ell$. Moreover, upon replacing $h$ by its inverse, we may then assume that $(y_n)$ converges to $\xi_-$. 

Let $\rho_n$ be the geodesic ray joining $x_n$ to $\xi_-$.  By Lemma~\ref{lem:semi-infinite2}, there exists $T$ such that $\rho_n(t) \in \mathcal H$ for all $t \geq T$. By Lemma~\ref{lem:Apt}, there exists $\vareps \in (0, 1)$, $L\geq 1$ and $t_n \in [T, T+L]$ such that 
$\rho_n(t_n)$ is $\varepsilon$-regular.  Set $q_n = \rho_n(t_n)$, so $B(q_n; \vareps) \subset \mathcal H$. 

Now let $y'_n$ be the projection of $y_n$ to the ray $\{\rho_n(t)\}_{t \geq T}$. Since $d(y_n, \ell) < C$, it follows that $d(y_n, y'_n)$ is uniformly bounded. From the \cat inequality in the triangle $(x_n, y_n, y'_n)$, we infer that for $n$ sufficiently large, the geodesic segment $[x_n, y_n]$ contains a point $q'_n$ which is $\vareps/2$-close  to $q_n$. Therefore $q'_n \in \mathcal H$. Moreover, $d(x_n, q'_n) \leq d(x_n, q_n) + \vareps/2 \leq T + L + \vareps/2$. Notice that the right-hand-side of the preceding inequality is independent of $n$. 

By interchanging the roles of $x_n$ and $y_n$ in the above argument, we find a point $r'_n \in [x_n, y_n] \cap \mathcal H$ such that $d(y_n, r'_n) \leq T + L + \vareps/2$ for all $n$ sufficiently large. It follows that the length of the geodesic segment $[x_n, y_n] \cap \mathcal H\supseteq [q'_n,r'_n]$ is at least $M_n - 2T - 2L - \vareps$ for  all $n$ sufficiently large. This is a contradiction.
\end{proof}

We finish this subsection by recording a subsidiary fact on combinatorial hulls. 

\begin{lemma}\label{lem:ConvexHull}
Let $b \subset \mathcal H$ be a chamber. For each $C \geq 0$, there exists $M = M(C) \geq 0$ such that for any two chambers $a, a' \subset \mathcal H$ with $d(a, \ell) < C$, $d(a', \ell) < C$ and $d(a, a')\geq  M$, the combinatorial convex hull of $\{a, a'\}$ contains a chamber in the $\la h \ra$-orbit of $b$.
\end{lemma}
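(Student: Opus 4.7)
The plan is to work inside an apartment $\mathcal A$ containing $\ell$, so that $\mathcal H\subseteq\mathcal A$ (by combinatorial convexity of apartments) and in particular $a,a',b$ all lie in $\mathcal A$. Parameterise $\ell$ by $\R$ so that $h$ acts by translation by $\tau>0$; for a chamber $c\subseteq\mathcal A$, write $\pi_\ell(c)\in\ell$ for the \cat projection of the barycentre of $c$ onto $\ell$, and set $D:=d(b,\ell)$. Since $\la h\ra$ acts cocompactly on $\ell$, it acts cocompactly on the set of chambers of $\mathcal H$ at \cat distance less than $C$ from $\ell$, so the gallery metric on this ``$C$-tube'' is quasi-isometric to the $\pi_\ell$-distance. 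Given any prescribed constant $K$, some $M=M(C,K)$ then forces $\pi_\ell(a')-\pi_\ell(a)\geq 2K+\tau$ (WLOG $\pi_\ell(a)<\pi_\ell(a')$), whence one may pick $n\in\Z$ with $\pi_\ell(h^n b)=\pi_\ell(b)+n\tau\in[\pi_\ell(a)+K,\pi_\ell(a')-K]$. The proof thus reduces to the following \emph{helper claim}: there exists $K=K(C,D)$ such that any chamber $c\subseteq\mathcal H$ with $d(c,\ell)\leq D$ and $\pi_\ell(a)+K\leq\pi_\ell(c)\leq\pi_\ell(a')-K$ lies in the combinatorial convex hull of $\{a,a'\}$; applying this with $c:=h^n b$ finishes.

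The helper claim is proved by contradiction. If $c$ is not in the hull, there is a wall $H$ of $\mathcal A$ with $a,a'$ on one side and $c$ on the other. A wall $H$ disjoint from $\ell$ is impossible: $\mathcal H$, being the intersection of all roots of $\mathcal A$ containing $\ell$, lies in the root of $H$ containing $\ell$, so all three chambers share that side. Therefore $H$ crosses $\ell$ transversally at a unique point $q$, at a positive angle $\theta_H$. Standard \cat geometry in $\mathcal A$ then shows that for any chamber $c'$ with $d(c',\ell)\leq\max(C,D)$, the side of $H$ on which $c'$ sits is determined by the side of $q$ on $\ell$ containing $\pi_\ell(c')$ as soon as $|\pi_\ell(c')-q|$ exceeds some discrepancy controlled by $\max(C,D)$ and $\theta_H$. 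Taking $K$ larger than twice the supremum of this discrepancy over all walls $H$, the positioning hypothesis forces $\pi_\ell(a)$ and $\pi_\ell(a')$ to straddle every $q$ close enough to $\pi_\ell(c)$ for $c$'s side of $H$ to be ambiguous; but then $H$ separates $a$ from $a'$, contradicting the assumption that they lie on the same side.

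The main obstacle is establishing that $\theta_H$ is bounded below by a positive constant $\theta_0$ independent of $H$, so that the discrepancy bound is uniform. The key observation is that the walls of $\mathcal A$ meeting $\ell$ coincide with the walls of $\mathcal H$ (walls of $\mathcal A$ disjoint from $\ell$ have $\mathcal H$ entirely on one side by the hull characterisation), and this wall structure is intrinsic to $\mathcal H$ as a chamber system, hence preserved by the $h$-action on $\mathcal H$. Since $\la h\ra$ acts cocompactly on $\ell$ and walls are locally finite in $\mathcal A$, the walls meeting $\ell$ form finitely many $\la h\ra$-orbits; and $h$ acts on these walls by isometries preserving $\ell$, so the angles $\{\theta_H\}$ form a $\la h\ra$-invariant finite set of positive reals, yielding $\theta_0:=\min_H\theta_H>0$ as required.
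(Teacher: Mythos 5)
Your argument is essentially correct, but it takes a genuinely different route from the paper. The paper proves the lemma by a soft double-compactness argument: assuming the conclusion fails along sequences $(a_n),(a'_n)$, it uses the local finiteness of $\mathcal H$ and the $\la h\ra$-action to normalise $a_n=a$ and $a'_n\to\xi_+$, extracts a combinatorially convex limit $\mathcal C$ of the hulls with $\xi_+\in\partial\mathcal C$, then extracts a second limit $\mathcal D$ of the translates $h^{-n}\mathcal C$ with $\{\xi_+,\xi_-\}\subset\partial\mathcal D$, identifies $\mathcal D=\mathcal H$ via Lemma~\ref{lem:bi-infinite}, and derives a contradiction. Your proof is instead quantitative: you characterise membership in the hull of $\{a,a'\}$ by wall separation inside $\mathcal A$ and show that every wall crossing $\ell$ separates points of the tube around $\ell$ according to their projections to $\ell$, up to a uniformly bounded discrepancy. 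This buys an explicit $M(C)$ (linear in $C$), whereas the paper's argument is purely existential; the price is more careful metric bookkeeping.

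Two steps of your write-up need shoring up. First, the ``angle'' $\theta_H$ at which a wall meets $\ell$ is not a well-defined quantity in a general Davis complex, since walls are convex but not flat. The robust substitute is the convex function $t\mapsto d(\ell(t),H)$: by \cite[Lem.~3.4]{NV02} and regularity of $\ell$, the set $\ell\cap H$ is a single point $q$, so this function vanishes only at $q$ and, by convexity, grows at least linearly on both sides with slope at least $d(\ell(q\pm 1),H)>0$; this replaces $\sin\theta_H$ in your discrepancy bound, and the side-determination follows because a geodesic from $\ell(t)$ to a point at distance $<C'$ cannot cross $H$ once $d(\ell(t),H)>C'$. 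Second, your uniformity argument asserts that $h$ permutes the walls of $\mathcal A$ crossing $\ell$, but $h$ need not stabilise $\mathcal A$ (it only stabilises $\mathcal H$). The fix is the one used in Lemma~\ref{lem:StraightWeylDist}: replace $h$ by $\gamma=\rho_{\mathcal A,c}\circ h$ for a chamber $c$ of $\mathcal A$ meeting $\ell$; then $\gamma$ is an automorphism of $\mathcal A$ translating $\ell$ by the same amount, it does permute the walls of $\mathcal A$ crossing $\ell$ while preserving the slope data above, and local finiteness of the walls gives finitely many $\gamma$-orbits, hence the uniform lower bound on the slopes that your argument requires. With these two repairs your proof goes through.
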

\begin{proof}
Assume the contrary. Then there exist sequences $(a_n)$ and $(a'_n)$ of chambers in $\mathcal H$ such that $d(a_n, \ell) < C$, $d(a'_n, \ell) < C$ and $d(a_n, a'_n) > n$, such that the convex hull of $\{a_n, a'_n\}$ does not contain any chamber  in the $\la h \ra$-orbit of $b$. Since $\mathcal H$ is locally finite, we may assume, upon extracting and replacing $a_n$ and $a'_n$ by other chambers in the same $\la h \ra$-orbit, that $a_n = a$ for all $n$. Upon extracting further, we may assume that $a'_n \to \xi_+$. Let then $\mathcal C_n$ denote the combinatorial convex hull of $\{a, a'_n\}$. Using again the local finiteness of $\mathcal H$, we may extract once more in order to ensure that for all $r >0$, there exists $N(r)$ such that for all $m, n \geq N(r)$, we have $B(a, r) \cap \mathcal C_m = B(a, r) \cap \mathcal C_n$, where $B(a, r)$ denotes the ball of combinatorial radius $r$ around $a$ in $\mathcal H$. 

We next set $\mathcal C = \bigcup_{k \geq 0} \bigcap_{n \geq k} \mathcal C_n$. Then $\mathcal C$ is combinatorially convex, since it is the union of an ascending chain of combinatorially convex subsets. In particular $\mathcal C$ is also closed and convex in the \cat-sense. Observe moreover that the visual boundary $\partial \mathcal C$ contains the point $\xi_+$. Indeed, let $\rho$ be  a geodesic ray emanating from a point in the chamber $a$ and pointing to $\xi_+$. Since  $a'_n \to \xi_+$ it follows that the intersection $\rho \cap B(a, r) \cap \mathcal C_{N(r)}$  is a geodesic segment whose length tends to infinity with $r$. Since $B(a, r) \cap \mathcal C_{N(r)} \subset \mathcal C$ by construction, it follows that $\mathcal C$ contains $\rho$, thereby confirming that $\xi_+ \in \partial \mathcal C$. 

Now for each $n \geq 0$, we set $\mathcal D_n =  h^{-n} \mathcal C$. We have $\mathcal D_n \subset \mathcal H$ and $\xi_+ \in \partial \mathcal D_n$ for all $n$. Using the local finiteness of $\mathcal H$ once more, we may extract a subsequence of $(\mathcal D_n)$  to ensure that for all $r >0$, there exists $N'(r)$ such that for all $m, n \geq N'(r)$, we have $B(a, r) \cap \mathcal D_m = B(a, r) \cap \mathcal D_n$. We then set $\mathcal D =  \bigcup_{k \geq 0} \bigcap_{n \geq k} \mathcal D_n$. Arguing similarly as above, we see that $\mathcal D$ is combinatorially convex, hence \cat-convex, and that $\partial \mathcal D$ contains $\xi_-$. Hence $\{\xi_+, \xi_-\} \subset \partial \mathcal D$, so that $\mathcal D$ contains a geodesic line parallel to $\ell$.  From Lemma~\ref{lem:bi-infinite} it follows that $\mathcal D = \mathcal H$. Hence there is some $m$ such that $b \subset \mathcal D_m$. Therefore the chamber $h^m b$ is contained in $\mathcal C$, hence in $\mathcal C_n$ for some $n$. This is a contradiction.
\end{proof}

\subsection{When regular automorphisms are WPD}

We now complete the proof of our criterion for an automorphism of a building to be WPD. 

\begin{proof}[Proof of Theorem~\ref{regularWPD}]
Let $h \in G$ be a regular automorphism with axis $\ell$, and $\mathcal H$ denote its combinatorial hull. 
By hypothesis there exists a chamber $b$ in $\mathcal H$ whose  stabiliser $G_b$ is finite.

\medskip
We now fix some $D>0$ and some $x\in X$. Applying Proposition \ref{prop:regularSegment} with $C =  d(x,\ell)+D$ we obtain a constant $T=T_{x,D}$ such that, for every $m$ sufficiently large and every pair of points $y,z$ with $d(x,y),d(h^mx,z)<D$ the geodesic segment $[y, z] \cap \mathcal H$ contains a geodesic segment $[y',z']$ with $d(y,y'),d(z,z')<T$. Applying Lemma~\ref{lem:Apt} with the same constant $C$, we get constants $\vareps \in (0, 1)$ and $L \geq 1$ such that every geodesic segment of length~$\geq L$ contained in the $C$-neighbourhood of $\ell$ in $\mathcal H$ contains an $\vareps$-regular point. Upon enlarging $m$, we may assume that $d(x, h^m x) > M+2 \delta + 2L+2T$, where $M = M(C)$ is the constant afforded by applying Lemma~\ref{lem:ConvexHull} to the chamber $b \subset \mathcal H$, and $\delta$ denotes the diameter of a chamber. 

Let $P = \setcon{g\in G}{ d(x,gx)<D \text{  and }d(h^mx,gh^mx)<D}$. We need to show that $P$ is finite. 

To this end, let $x' \in [x, h^m x]$ (resp.  $y' \in [x, h^m x]$) be the unique point with $d(x, x')=T$ (resp. $d(y', h^mx) = T$). By Proposition~\ref{prop:regularSegment}, we have $[x', y'] \subset \mathcal H$. Since $d(x', y')> M+2\delta +2L$, there exist two $\vareps$-regular points $p,p' \in [x', y']$ such that $d(p, x') \leq L$ and $d(p', y') \leq L$.  In particular we have $d(p, x) \leq T+L$ and $d(p', h^m x)\leq T+L$. Moreover $d(p, p') \geq M+2\delta$ since $d(x, h^m x) > M  +2\delta +2L+2T$.  

Let $a, a' \subset \mathcal H$ be the unique chambers containing $p, p'$ respectively, so that $d(a, a')\geq M$. Hence Lemma~\ref{lem:ConvexHull} ensures that the convex hull of $\{a, a'\}$ contains a chamber $b'$ in the $\la h \ra$-orbit of $b$. In particular $b'$ has finite stabiliser. 

We claim that $P \subseteq \setcon{g \in G}{ga \cup ga' \subset \mathcal H \text{ and } \max\{d(a, ga), d(a', ga') \}\leq D}$. 

Indeed, let $g \in P$. Then $d(gx, \ell) <C$ and $d(gh^m x, \ell)<C$. Thus Proposition~\ref{prop:regularSegment} yields a segment $[x'_g, y'_g] \subset [gx, gh^mx] \cap \mathcal H$ such that $d(gx, x'_g)< T$ and $d(gh^m x, y'_g)< T$. By the definition of $x'$ and $y'$, it follows that  $[gx', gy'] \subset [x'_g, y'_g] \subset \mathcal H$, since $d(x, x')= d(y', h^m x) =T$. In particular we have $gp, gp' \in [x'_g, y'_g]$, so that  $gp, gp' \in \mathcal H$.  The points $gp$ and $gp'$ are   regular, so we infer that $ga$ and $ga'$ are both chambers contained in $\mathcal H$. 

Recalling that the displacement function $z \mapsto d(z, gz)$ is convex (see \cite[Prop.~II.6.2(3)]{BH99}), we have $\max\{d(p, gp), d(p', gp'\} \leq \max \{d(x, gx), d(h^m x, gh^m x)\} \leq D$. Hence we deduce that $\max\{d(a, ga), d(a', ga')\} \leq D$ and the claim is verified. 

The convex hull of $\{a, a'\}$ is bounded and contains the chamber $b'$, so the claim implies that there exists a constant $D'$ such that $P \subseteq \setcon{g \in G}{gb' \subset \mathcal H \text{ and } d(b', gb') \leq D'}$.

Since $\mathcal H$ is proper and the stabiliser of $b'$ is finite, we conclude that $P$ is indeed finite.  Hence $h$ is WPD.
\end{proof}

We record the following immediate consequence of Theorems~\ref{WPDstrcontAcylhyp} and~\ref{regularWPD}. 

\begin{corollary}\label{cor:AcylBuilding1}
Let $X$ be a building of type $(W, S)$, with $S$ finite, and let $G \leq \Aut(X)$ be a group acting with finite chamber stabilisers. If $G$ contains a strongly contracting, regular automorphism, then $G$ is either virtually cyclic or acylindrically hyperbolic. \qed
\end{corollary}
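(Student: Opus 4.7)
The plan is to verify that a strongly contracting, regular automorphism $h \in G$ satisfies all the hypotheses of Theorem~\ref{WPDstrcontAcylhyp}, with the WPD condition coming directly from Theorem~\ref{regularWPD}. The proof is essentially a direct combination of these two results.

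First, I would record that since $h$ is regular, it is by definition a hyperbolic isometry of $X$, so it has infinite order and positive translation length along its axis $\ell$. Next, I need to check that the combinatorial hull $\mathcal H$ of $h$ contains a chamber with finite stabiliser in $G$. Because $\ell$ is a regular geodesic line, it meets the interior of at least one chamber $c$ of the apartment $\mathcal A$ containing it; in particular $c \subset \mathcal H$. By hypothesis every chamber stabiliser in $G$ is finite, so $\mathrm{Stab}_G(c)$ is finite. Theorem~\ref{regularWPD} therefore applies and shows that the action of $h$ on $X$ is WPD.

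Second, the strongly contracting hypothesis on $h$ gives condition (ii) of Theorem~\ref{WPDstrcontAcylhyp} directly: there is a point $x \in X$ such that the closest point projection from $X$ to the orbit $\langle h \rangle \cdot x$ has bounded image on every metric ball disjoint from $\langle h \rangle \cdot x$. Together with the WPD property just established, both conditions of Theorem~\ref{WPDstrcontAcylhyp} are satisfied.

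Applying Theorem~\ref{WPDstrcontAcylhyp} then yields the conclusion that $G$ is either virtually cyclic or acylindrically hyperbolic. There is no genuine obstacle here; the only point requiring care is the observation that a regular axis necessarily meets chamber interiors, which ensures that $\mathcal H$ contains a chamber and hence that the finite-stabiliser hypothesis of Theorem~\ref{regularWPD} is met automatically under the standing assumption on $G$.
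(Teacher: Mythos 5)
Your proof is correct and is exactly the argument the paper intends: the paper records this corollary with no written proof precisely because it is the immediate combination of Theorems~\ref{regularWPD} and~\ref{WPDstrcontAcylhyp} that you spell out. Your one substantive observation --- that a regular axis meets the interior of some chamber, so the combinatorial hull automatically contains a chamber, whose stabiliser is then finite by the standing hypothesis --- is the right point to make explicit and is fully justified by the definition of a regular line given in Section~\ref{sect:regisoms}.
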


A sufficient condition for $G$ to act with finite chamber stabilisers is that the $G$-action is metrically proper. However, the latter condition is much stronger: for example, the orthogonal forms of real Kac--Moody groups will be shown below to act on their building with finite chamber stabilisers, but their action is not metrically proper. Similarly, the action of a graph product of infinite groups on the associated right-angled building (see Proposition~\ref{prop:GraphProd} below) is not metrically proper.

\section{Acylindrical hyperbolicity for groups acting on buildings}\label{sec:AcylBuilding}

The goal of this chapter is to prove Theorem~\ref{thm:AcylBuilding}. The subtlety here is in ensuring that a strongly contracting axis is regular. We begin by studying this in the simplest case of the action of a Coxeter group on its Davis complex.

\subsection{Regular strongly contracting elements of Coxeter groups}

\begin{proposition}\label{prop:Coxeter}
Let $(W, S)$ be an irreducible, non-spherical, non-affine Coxeter system with $S$ finite. Then $W$ contains an element $w$ acting as a regular, strongly contracting, automorphism of the Davis complex $X$. 
\end{proposition}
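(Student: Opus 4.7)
My plan is to produce a single element $w \in W$ whose action on the Davis complex $X$ is simultaneously rank~$1$ (hence strongly contracting by \cite[Th.~5.4]{BestvinaFujiwara}, as recalled before Proposition~\ref{prop:Sisto}, since $X$ is a proper \cat space) and regular in the sense of Section~\ref{sect:regisoms}. The existence of a rank~$1$ element in $W$ is provided directly by \cite[Cor.~4.7]{CapFuj-10-rnk1}, so the real content is to ensure that the axis can be taken to be regular, i.e., not contained in any bounded neighbourhood of a wall of $X$ (which, as the unique apartment of the building associated to $(W,S)$, is the ambient space in which walls live).

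To promote a rank~$1$ element to one with a regular axis I would invoke the notion of a \emph{straight} element of $W$, i.e.\ one with $\ell_S(w^n) = n\,\ell_S(w)$ for every $n \geq 1$ (as alluded to in the acknowledgements to T. Marquis). The axis of a straight element in $X$ admits a transparent combinatorial description: it traces out a bi-infinite reduced gallery, crosses each wall it meets exactly once, and passes through the interior of infinitely many chambers. The plan is to combine the Caprace--Fujiwara construction with a straightening procedure (taking an appropriate power or conjugate, or replacing $w$ by a product involving a generic reflection) so as to produce an element $w \in W$ which is simultaneously rank~$1$ and straight.

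It then remains to check that any rank~$1$ straight element has a regular axis $\ell$. Suppose for contradiction that $\ell$ lies within bounded Hausdorff distance of some wall $M$. By the Flat Strip Theorem there is a geodesic line $\ell' \subset M$ parallel to $\ell$ at positive distance $d$; reflecting $\ell$ across $M$ then produces a second line $s_M \ell$ parallel to $\ell$, together with a flat strip of width $2d$ between them. Under the assumption that $(W,S)$ is irreducible, non-spherical and non-affine, the family of walls of $X$ parallel to $M$ is infinite, and iterating the construction above (reflecting across successive parallel walls, or translating by powers of $w$) builds a sequence of parallel lines at unbounded perpendicular distance from $\ell$, yielding a half-flat bounded by $\ell$ and contradicting the rank~$1$ property. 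The main obstacle I anticipate is precisely this last propagation step: verifying under the stated hypotheses that parallelism of $\ell$ to a single wall must spread into a genuine half-flat rather than remaining a bounded slab. An alternative route, bypassing straight elements, would be to prove directly---using the Tits-cone and root-system structure of $W$---that in an irreducible, non-spherical, non-affine Coxeter complex no non-wall geodesic line can be parallel to a wall, so that regularity of the axis of any rank~$1$ element is automatic.
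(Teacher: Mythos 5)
Your reduction of the problem to regularity is the right one, and the strong-contraction half is fine, but the mechanism you propose for forcing regularity does not work. The rank~$1$ condition forbids half-flats; it says nothing about the axis being close to, or even contained in, a wall, and the two phenomena are logically independent. In a hyperbolic Davis complex (say a hyperbolic triangle group, where every infinite-order element is rank~$1$) there are elements translating along a wall $M$ --- products of two reflections whose walls meet $M$ perpendicularly --- whose axis is literally contained in $M$ and hence maximally non-regular. For such an element your flat-strip argument produces nothing: $s_M\ell=\ell$, the strip has width zero, and there is no flat to propagate, so no contradiction with rank~$1$ can be extracted. Hence no amount of care in the ``propagation step'' can close the gap: the implication you need at that point (rank~$1$, or rank~$1$ and straight, implies regular) is simply not a consequence of the half-flat condition. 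Your proposed alternative (``no non-wall geodesic can be parallel to a wall, so regularity is automatic'') fails for the same reason; note that even the much more specific statement that Coxeter elements are regular is only known in the crystallographic case \cite[Lem.~3.4]{Marquis}. The straightening step is also problematic: the available criteria for straightness (Lemma~\ref{lem:Marquis}, and \cite{Marquis-straight}) take regularity, or at least an axis passing through a chamber interior, as a \emph{hypothesis}, so deriving regularity from straightness risks circularity.

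The paper instead controls the \emph{endpoints} of the axis rather than its local geometry. By Lemma~\ref{lem:NR} the $W$-action on the Niblo--Reeves cube complex is irreducible, essential and without fixed point at infinity, so \cite[Prop.~5.1]{CapraceSageev} yields two walls $\hat h, \hat h'$ such that no wall crosses both. By \cite[Prop.~3.5 and Cor.~4.7]{CapFuj-10-rnk1} one then finds a rank~$1$ element $w$ whose attracting and repelling fixed points lie in prescribed neighbourhoods $V^{\pm}$ of the endpoints of the axis of the product of the two associated reflections, chosen to avoid $\partial \hat h \cup \partial \hat h'$. If the axis of $w$ remained in a bounded neighbourhood of some wall $M$, both of its endpoints would lie in $\partial M$, forcing $M$ to cross both $\hat h$ and $\hat h'$, a contradiction. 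To salvage your approach you would need an input of this kind (strongly separated walls) in place of the flat-strip/half-flat dichotomy.
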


\begin{proof}
It is proved in \cite[Cor.~4.7]{CapFuj-10-rnk1} that $W$ contains rank~$1$ elements: in fact, any \textbf{Coxeter element}, defined as the product of all elements of $S$ taken in an arbitrary order, is rank~$1$. Since $X$ is proper, rank~$1$ elements are strongly contracting (see \cite[Th.~5.4]{BestvinaFujiwara}). However, an extra argument is needed to ensure the existence of such elements that are also \emph{regular}. One possible way to do so would be to  show that the Coxeter elements are regular, i.e. no non-zero power of a Coxeter element stabilises a wall. This is proved to be the case in \cite[Lem.~3.4]{Marquis} under the extra hypothesis that $W$ is the Weyl group of a Kac--Moody algebra, i.e. $W$ is \textbf{crystallographic}, in the sense that the only Coxeter numbers involved in the Coxeter presentation of  $W$ with respect to $S$ belong to the set $\{2, 3, 4, 6, \infty\}$. It is very likely that Coxeter elements are regular in all  irreducible, non-spherical, non-affine Coxeter groups. Nevertheless, in order to deal with general Coxeter groups, we will follow an alternative approach, using \cat cube complexes. Notice however that the case of crystallographic Coxeter groups is the only relevant one for the Main Theorem and the Corollary from the introduction. 

By Lemma~\ref{lem:NR} below, the $W$-action on its Niblo--Reeves \cat cube complex $Y$ is essential, without a fixed point at infinity. Moreover $Y$ is irreducible and  finite-dimensional. It then follows from \cite[Prop.~5.1]{CapraceSageev} that $Y$ contains a pair of distinct hyperplanes $\hat h$, $\hat h'$ such that no hyperplane crosses both of them. The same property therefore holds in the Davis complex $X$. Now let $g \in W$ be the product of the two reflections associated with $\hat h$ and $\hat h'$. Let $V^+$ be a neighbourhood of its attracting fixed point in $\partial X$ (resp. $V^-$ be a neighbourhood of its repelling fixed point) containing no boundary point of $\hat h$ or $\hat h'$. By \cite[Prop.~3.5 and Cor.~4.7]{CapFuj-10-rnk1} there exists a rank~$1$ element $w \in W$ whose attracting and repelling fixed points belong respectively to $V^+$ and $V^-$. If $w$ were not regular, then its attracting and repelling fixed points would belong to the boundary of a hyperplane. By construction, that hyperplane would cross both $\hat h$ and $\hat h'$, which is a contradiction.
\end{proof}

\begin{lemma}\label{lem:NR}
Let $(W, S)$ be an irreducible, non-spherical, non-affine Coxeter system with $S$ finite and let $Y$ be the associated \cat cube complex, as constructed by Niblo--Reeves \cite{NibloReeves}. Then $Y$ is an irreducible, proper, finite-dimensional cube complex. Moreover the $W$-action on $Y$ is proper, essential, without a fixed point in $\partial Y$. 
\end{lemma}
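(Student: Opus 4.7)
The plan is to verify the structural properties of $Y$ directly from the Niblo--Reeves construction, and then handle essentiality and the absence of a fixed point at infinity by leaning on rank-one results from \cite{CapFuj-10-rnk1}. The cubes of $Y$ are indexed by spherical subsets $T \subseteq S$ together with cosets of the finite parabolic $W_T$; since $S$ is finite there are only finitely many spherical subsets, which bounds $\dim Y$ by $|S|$ and ensures that each vertex lies in only finitely many cubes. Hence $Y$ is finite-dimensional and locally finite (i.e.\ proper). The vertex set of $Y$ is canonically identified with $W$, on which $W$ acts by left multiplication freely and simply transitively, so the $W$-action on $Y$ is free and cocompact. For irreducibility, a nontrivial splitting $Y = Y_1 \times Y_2$ would partition the hyperplanes of $Y$ into two nonempty classes $\mathcal{H}_1 \sqcup \mathcal{H}_2$ whose elements pairwise cross, with reflections across $\mathcal H_1$ acting only on the $Y_1$-factor (and symmetrically for $\mathcal H_2$), hence pairwise commuting. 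Since every reflection of $W$ has the form $s_{\hat h}$ for a unique hyperplane $\hat h$ of $Y$, this would yield a nontrivial direct product decomposition $W = W_1 \times W_2$, and in particular a partition $S = S_1 \sqcup S_2$ with $m_{s_1,s_2} = 2$ for all $s_1 \in S_1$, $s_2 \in S_2$---contradicting irreducibility of $(W,S)$.

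For the dynamical properties, I would exploit the existence of rank-one isometries of the Davis complex $\Sigma$. By \cite[Cor.~4.7]{CapFuj-10-rnk1}, under our hypotheses $W$ contains rank-one isometries of $\Sigma$; since $Y$ and $\Sigma$ are $W$-equivariantly quasi-isometric proper cocompact \cat models of $W$, a standard argument then excludes a global $W$-fixed point in $\partial Y$ (such a point would be one of the two endpoints of every rank-one axis in $W$, which is prevented by applying conjugation by suitable elements of the infinite group $W$). Essentiality is deduced via a reflection argument: if some orbit $Wx$ were contained in a bounded neighbourhood $N_r(h^+)$ of a halfspace $h^+$, then applying the reflection $s_{\hat h} \in W$ across the bounding hyperplane $\hat h$ would confine $Wx$ also to $N_r(h^-)$, hence to a bounded neighbourhood of $\hat h$ itself. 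Local finiteness of $Y$ would then force the $W$-orbit of $\hat h$ to be finite, so a finite-index subgroup $W_0 \leq W$ would preserve a bounded tube around $\hat h$. A suitable power of any rank-one element of $W$ would then lie in $W_0$ and have its axis inside this bounded tube---impossible, since a rank-one axis cannot bound a half-flat.

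The main obstacle is the essentiality deduction above: combining reflection symmetry with local finiteness to localise a rank-one axis inside a bounded tube around a hyperplane is the delicate step, but once this set-up is in place, the contradiction with rank-one geometry (via \cite{CapFuj-10-rnk1}) closes the argument. The remaining ingredients are either standard combinatorics of Niblo--Reeves cube complexes or immediate consequences of the simply-transitive action of $W$ on the vertex set.
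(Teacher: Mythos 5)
There are genuine gaps here, the most serious being the irreducibility argument. Your claim that a decomposition $Y=Y_1\times Y_2$ forces the reflections attached to $\mathcal H_1$ to commute with those attached to $\mathcal H_2$ is false: in the Niblo--Reeves complex two hyperplanes cross if and only if the corresponding reflections generate a \emph{finite} dihedral group, not necessarily one of order $4$, and a reflection stabilising a hyperplane of the $Y_1$-factor need not act trivially on $Y_2$. The affine group $\tilde A_2$ is a concrete counterexample to your reasoning: it is irreducible as a Coxeter system, its Niblo--Reeves complex \emph{is} a nontrivial product (the three parallelism classes of walls pairwise cross), and reflections in non-parallel walls generate dihedral groups of order $6$, so they do not commute. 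Your argument never invokes the non-affine hypothesis, which is exactly what rules out this phenomenon; the paper's proof uses it essentially, via Corollary~F and the Grid Lemma of Caprace--Marquis (two reflections not contained in a common proper parabolic have disjoint walls, forcing an infinite chain inside one class, and the Grid Lemma then forces affine type). A second structural error: you have described the Davis complex, not the Niblo--Reeves cubulation. The cubes of $Y$ are indexed by (cosets of) maximal families of pairwise-crossing walls, which for non-right-angled types can generate infinite reflection subgroups; the vertex set of $Y$ properly contains $W$ in general, the action is not simply transitive on vertices, and cocompactness is a nontrivial theorem (and is not needed). Properness of $Y$ and of the action, and finite-dimensionality, are correct conclusions but must be quoted from Niblo--Reeves rather than derived from your indexing.

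Your essentiality argument also does not close: after correctly reducing to the case where some orbit lies in a bounded tube $N_r(\hat h)$ and concluding that $W\hat h$ is finite, you claim a contradiction because a rank-one axis inside such a tube would ``bound a half-flat''. It would not: an axis lying in a bounded neighbourhood of a hyperplane gives at most a flat strip of bounded width, which is perfectly compatible with being rank one, so no contradiction is reached. (The paper instead gets essentiality directly from the fact that every hyperplane lies in an infinite chain of half-spaces, by \cite[Lem.~2.19]{CapraceMarquis}.) Finally, your treatment of the absence of a fixed point in $\partial Y$ is only a sketch; the paper's argument is more elementary and avoids rank-one elements altogether: a hyperplane transverse to a ray towards $\xi$ yields a reflection in $W$ that swaps the two half-spaces and hence moves $\xi$.
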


\begin{proof}
The assertions that $Y$ is proper, finite-dimensional, and that the $W$-action is proper, are proved in \cite{NibloReeves}. That $W$ acts essentially follows from \cite[Lem.~2.19]{CapraceMarquis}, which ensures that every hyperplane belongs to an infinite set consisting of those hyperplanes bounded by an infinite chain of half-spaces. Given a point $\xi \in \partial Y$, consider a hyperplane $\hat h$ which is transverse to a geodesic ray pointing to $\xi$. Then the reflection of $W$ associated with $\hat h$ swaps the corresponding two half-spaces of $Y$, and therefore it does not fix $\xi$. 

It remains to show that $Y$ is irreducible. If that were not the case, then by \cite[Lem.~2.5]{CapraceSageev} the set of hyperplanes of $Y$ would be partitioned into two subsets $\mathcal H_1 \sqcup \mathcal H_2$ such that every hyperplane in $\mathcal H_1$ crosses every hyperplane in $\mathcal H_2$. By \cite[Cor.~F]{CapraceMarquis}, there is a pair of reflections $r, r'$ in $W$ such that the group $\la r, r'\ra$ is not contained in any proper parabolic subgroup of $W$. The two hyperplanes $\hat h$ and $\hat h'$ respectively stabilised by $r$ and $r'$ must be disjoint. Therefore, upon exchanging $\mathcal H_1$ and $\mathcal H_2$, we may assume that they both belong to $\mathcal H_1$. In particular so do all hyperplanes in the $\la r, r'\ra$-orbit of $\hat h$. As remarked above, every hyperplane belongs to an infinite set consisting of those hyperplanes bounded by an infinite chain of half-spaces. In particular $\mathcal H_2$ must also contain such an infinite set of hyperplanes. We now invoke the Grid Lemma from \cite[Lem.~2.8]{CapraceMarquis}. This implies that $W$ is of affine type, which is a contradiction.  
\end{proof}

\subsection{Straight elements and their combinatorial hulls}

Let $(W, S)$ be a Coxeter system. An element $w \in W$ is called \textbf{straight} if $\ell_S(w^n) = n \ell_S(w)$ for all $n$, where $\ell_S$ denotes the word length with respect to the generating set $S$. Various criteria ensuring that an element is straight are provided by T.~Marquis' paper \cite{Marquis-straight}. We extract the following. 

\begin{lemma}\label{lem:Marquis}
Let $w \in W$ be an element acting  as a regular automorphism of the Davis complex of $(W,S)$. Then $w$ is straight if and only if it is of minimal length in its conjugacy class. 
\end{lemma}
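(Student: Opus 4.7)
My plan is to handle the two implications separately; the forward direction is classical and the converse is where regularity is essential.

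For the forward direction, suppose $w$ is straight, and let $w' = gwg^{-1}$ be any conjugate with $g \in W$. Writing $w^n = g^{-1} w'^n g$ and applying subadditivity of $\ell_S$ with respect to multiplication gives
\[
n\ell_S(w) \;=\; \ell_S(w^n) \;\leq\; 2\ell_S(g) + \ell_S(w'^n) \;\leq\; 2\ell_S(g) + n\ell_S(w').
\]
Dividing by $n$ and letting $n \to \infty$ kills the additive error $2\ell_S(g)$, leaving $\ell_S(w) \leq \ell_S(w')$. Thus $w$ has minimal length in its conjugacy class. (This direction does not require regularity.)

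For the converse, assume $w$ is regular and of minimal length in its conjugacy class. Identify word length $\ell_S(u) = d_{Ch}(C_0, uC_0)$ with chamber distance in the Coxeter complex $X$, where $C_0$ is the fundamental chamber. The combinatorial translation length $\tau(w) := \lim_n \ell_S(w^n)/n$ is a lower bound for the length of any conjugate of $w$. The key point is that regularity makes $\tau(w)$ attainable as an actual word length: by Lemma~\ref{lem:bi-infinite} the combinatorial hull $\mathcal H$ of $w$ lies in a single apartment $\mathcal A$, and any CAT($0$)-axis $\ell \subset \mathcal A$ crosses chamber interiors (by regularity), producing a bi-infinite gallery $(\ldots, C_{-1}, C_0', C_1, \ldots)$ in $\mathcal A$ on which $w$ acts by a combinatorial shift. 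Because it lives in an apartment and tracks a CAT($0$) geodesic, this gallery is minimal between any two of its chambers; in particular $d_{Ch}(C_i, w C_i) = \tau(w)$ for each $i$. Picking $g \in W$ with $gC_0 = C_i$ for some such chamber, one obtains $\ell_S(g^{-1} w g) = d_{Ch}(C_0, g^{-1}wg\,C_0) = d_{Ch}(C_i, wC_i) = \tau(w)$. Minimality of $\ell_S(w)$ in its conjugacy class then forces $\ell_S(w) = \tau(w)$, hence $\ell_S(w^n) = n\tau(w) = n \ell_S(w)$, i.e.\ $w$ is straight.

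The main technical obstacle is the middle assertion: verifying that the sequence of chambers pierced by a regular CAT($0$)-axis forms a combinatorially minimal gallery in the apartment, and that the combinatorial translation length along this gallery equals $\tau(w)$. This is precisely what regularity buys (without it, the axis could track a wall and the gallery would have cancellations), and it is exactly the content of the relevant characterisation of straight elements in T.~Marquis's paper \cite{Marquis-straight}; I would invoke his results to conclude, rather than redoing the combinatorial analysis of axes from scratch.
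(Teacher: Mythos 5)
Your proof is correct and follows essentially the same route as the paper: both hinge on the observation that regularity gives an axis through the interior of some chamber, whence the conjugate of $w$ based at such a chamber is straight (the fact you and the paper both delegate to Marquis), and then transfer straightness back to $w$ using minimality of its length. The only differences are cosmetic: you prove the forward direction and the final transfer step elementarily via the translation length $\tau(w)=\lim_n \ell_S(w^n)/n$, where the paper instead cites Lemmas~4.1 and~4.2 of \cite{Marquis-straight}.
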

\begin{proof}
By Lemma~4.1 from \cite{Marquis-straight}, any straight element is of minimal length in its conjugacy class. 

Suppose conversely that $w$ is of minimal length in its conjugacy class. 
Let $\Sigma$ denote the Davis complex, and $c_0$ be the fundamental chamber (corresponding to the trivial element of $W$).  Since $w$ is regular, it has an axis meeting the interior of  some chamber $c$ of $\Sigma$. Therefore $w$ has a conjugate element $v$ with an axis meeting the interior of $c_0$. By Remark~4.4 from \cite{Marquis-straight}, it follows that $v$ is straight. Thus $v$ is  also of minimal length within its conjugacy class. Hence $\ell_S(v)=\ell_S(w)$. Therefore, Lemma~4.2 from \cite{Marquis-straight} ensures that $w$ is straight as well. 
\end{proof}

We also record a criterion ensuring that a chamber of a building is contained in the combinatorial hull of a given automorphism. 

\begin{lemma}\label{lem:StraightWeylDist}
Let $X$ be a building of type $(W,S)$. Let $h \in \Aut(X)$ and let $b$ be a chamber of $X$ such that the Weyl-distance $w = \delta(b, hb) \in W$ is straight, of minimal length in its conjugacy class, and is a regular isometry of the Davis complex of $(W, S)$. Then the following hold. 
\begin{enumerate}
\item $h$ is a regular automorphism of $X$.

\item  $b$ is contained in the combinatorial hull of $h$. 

\item If $w$ is strongly contracting on the Davis complex of $(W, S)$, then $h$ is strongly contracting on $X$.
\end{enumerate}
\end{lemma}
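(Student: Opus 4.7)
The plan is to use straightness of $w$ to fit the entire $h$-orbit of $b$ into a single apartment $\mathcal A$ of $X$, in which $h$ acts as the transported copy of $w$, thereby reducing each of the three conclusions to the corresponding statement for $w$ on the Davis complex $\Sigma$. Since $h$ is an automorphism, $\delta(h^k b, h^{k+1} b) = w$ for every $k \in \Z$; concatenating the associated minimal galleries and invoking straightness $\ell_S(w^n) = n\ell_S(w)$ shows that every finite sub-gallery of $(h^n b)_{n \in \Z}$ is minimal. A standard compactness argument for buildings then provides an apartment $\mathcal A$ containing all the chambers $h^n b$. Under the unique identification $\mathcal A \cong \Sigma$ sending $b$ to the fundamental chamber $e$, the restriction $h|_{\mathcal A}$ becomes left translation by $w$. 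Part (i) is then immediate: a regular axis $\ell_W$ of $w$ on $\Sigma$ transports to a regular axis $\ell \subset \mathcal A$ of $h$ in $X$.

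For (ii) I appeal to Marquis's analysis of straight elements. Since $w$ is straight, regular, and of minimal length in its conjugacy class, the combined content of Remark~4.4 and Lemma~4.2 of his paper (used in the spirit of the proof of Lemma~\ref{lem:Marquis}) yields an axis of $w$ passing through the interior of $e$. Translated back to $X$, the chamber $b$ then lies on an axis of $h$, and therefore belongs to the combinatorial hull $\mathcal H$.

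For (iii) I transfer the strong contracting property from $\mathcal A \cong \Sigma$ to $X$. The apartment $\mathcal A$ is CAT$(0)$-convex in $X$ (Davis), so for the nested convex triple $\ell \subset \mathcal A \subset X$ the CAT$(0)$ closest-point projection factors as $\pi_\ell^X = \pi_\ell^{\mathcal A} \circ \pi_{\mathcal A}^X$; both factors are $1$-Lipschitz, and $\pi_\ell^{\mathcal A}$ is strongly contracting by hypothesis. This handles at once all balls $B(y,r)$ whose image under $\pi_{\mathcal A}^X$ remains disjoint from $\ell$ in $\mathcal A$. The main obstacle is the residual case where $B(y,r) \cap \ell = \varnothing$ in $X$ yet $\pi_{\mathcal A}^X(B(y,r))$ meets $\ell$; this forces $y$ to be genuinely far from $\mathcal A$, and the crude $1$-Lipschitz bound only gives $\mathrm{diam}(\pi_\ell^X(B(y,r))) \leq 2r$. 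To replace this by a uniform constant I appeal to Proposition~\ref{prop:regularSegment}: two points of $B(y,r)$ whose $\pi_\ell^X$-images are far apart on $\ell$ would have to be joined by a geodesic traversing a long segment of $\mathcal H$ close to $\ell$, which is incompatible with both points sitting in a small ball lying far from $\mathcal A$.
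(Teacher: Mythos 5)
There are genuine gaps at all three steps, even though your overall strategy (use straightness to put the orbit $\{h^nb\}$ into one apartment and compare $h$ with $w$) matches the paper's. The first and most structural problem is the phrase ``the restriction $h|_{\mathcal A}$ becomes left translation by $w$'': the automorphism $h$ has no reason to map $\mathcal A$ to itself. An apartment containing all the chambers $h^nb$ need not be $h$-invariant, so $h|_{\mathcal A}$ is not a self-map of $\mathcal A$ and cannot be identified with $w$. The paper circumvents this by setting $\gamma=\rho_{\mathcal A,b}\circ h$, which \emph{is} an automorphism of $\mathcal A$ identifiable with $w$, and by locating an $h$-axis inside the non-empty, closed, convex, $h$-invariant set $\bigcap_{n\in\Z}h^n\mathcal A$; since $\rho_{\mathcal A,b}$ fixes $\mathcal A$ pointwise, that particular axis is simultaneously a $w$-axis, and regularity of $w$ then gives (i). Your reverse transport (``a regular axis of $w$ transports to an axis of $h$'') is exactly the step that fails without this: an arbitrary $w$-axis need not be an $h$-axis. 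This also undermines your (ii): you invoke Marquis to get an axis of $w$ through the interior of the fundamental chamber, but Remark~4.4 of \cite{Marquis-straight} gives the \emph{opposite} implication (axis through the interior $\Rightarrow$ straight), and Lemma~4.2 only transfers straightness between conjugates of equal length; neither produces an axis through the interior of $b$. The paper instead proves $b\in\mathcal H_w$ directly: if a half-apartment $\mathfrak h$ contained $b$ but no chamber of $\mathcal H_w$, finite dimensionality of the Niblo--Reeves complex yields $m$ with $(w^{mi}\mathfrak h)_{i\in\Z}$ pairwise disjoint, whence $\ell_S(w^{2m})<2\ell_S(w^m)$, contradicting straightness.

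For (iii), the factorisation $\pi_\ell^X=\pi_\ell^{\mathcal A}\circ\pi_{\mathcal A}^X$ is false for general nested convex subsets of a CAT($0$) space (already in the Euclidean plane: project onto a disk and then onto a diameter versus projecting directly onto the diameter), and nothing special about apartments rescues it. Your ``residual case'' is then not an edge case but the whole problem, and the appeal to Proposition~\ref{prop:regularSegment} there is too vague to carry it. The paper does not attempt a direct argument: it verifies that any residue stabilised by $h$ contains $\mathcal H$, hence corresponds to a $w$-stable residue of $\mathcal A$, and then quotes Theorem~5.1 of \cite{CapFuj-10-rnk1}, which is precisely the (non-trivial) transfer of the strongly contracting property from the Weyl group to the building. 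You should either reprove that theorem or cite it; the projection argument as written does not close the gap.
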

\begin{proof}
We first claim for that all $m < n \in \Z$, we have $\delta(h^m b, h^nb) = w^{n-m}$. Indeed, there exists a gallery of length $(n-m) \ell_S(w)$ joining $h^m b$ to $h^nb$, whose type is the word $w^{n-m}$. Since $w$ is straight, it follows that the word $w^{n-m}$ is reduced. Therefore the gallery we constructed is minimal. The claim follows.

Therefore the set $\{h^n b \; | \; n \in \Z\}$ is isometric (with respect to the Weyl-distance) to a subset of an apartment, and is hence contained in some apartment, say $\mathcal A$. Let $\rho = \rho_{\mathcal A, b}$ be the retraction onto $\mathcal A$ based at $b$ and let $\gamma = \rho \circ h$. We see that $\gamma$ belongs to the automorphism group of the apartment $\mathcal A$, which is isomorphic to $W$. Viewing $b$ as the fundamental chamber of $\mathcal A$, we obtain a specific isomorphism $\Aut(\mathcal A) \to W$ which maps each chamber $b'$ to the unique element of $W$ mapping $b$ to $b'$. In particular this isomorphism maps $\gamma$ to $w$. We view that isomorphism as an identification, so $\gamma = w$. 

Notice that $\bigcap_{n \in \Z} h^n \mathcal A$ is non-empty (because it contains $\setcon{h^n b}{n \in \Z}$), closed, CAT($0$)-convex and $\fgen{h}$-invariant. Therefore it contains an $h$-axis, say $\ell$. Since the retraction $\rho$ fixes $\mathcal A$ pointwise, it follows that $\ell$ is also a $w$-axis. By assumption, $w$ is regular, so we see  that $\ell$ is a regular line, hence $h$ is regular. This proves (i).

\medskip
Since $\bigcap_{n \in \Z} h^n \mathcal A$ is also combinatorially convex, and contains $\ell$, it contains the combinatorial hull $\mathcal H$ of $h$ in $X$. In particular $\mathcal H \subseteq \mathcal A$. Moreover, $h$ and $w$ share a common axis, implying that $\mathcal H$ is also the combinatorial hull of $w$. Therefore, it suffices to show that $b$ belongs to the combinatorial hull $\mathcal H_w$ of $w$. 

Assume that this is not the case. Then there exists a half-apartment $\mathfrak h$ containing $b$ but no chamber of $\mathcal H_w$. Since the maximal number of pairwise intersecting walls in $\mathcal A$ is bounded (this amounts to saying that the Niblo--Reeves cube complex $Y$ is finite-dimensional, see Lemma~\ref{lem:NR}), there is some $m>0$ such that the half-apartments $(w^{mi}\mathfrak h)_{i \in \Z}$ are pairwise disjoint. In particular the wall $\partial (w^m\mathfrak h)$ does not separate $b$ from $w^{2m}b$, so the chamber $w^mb$ does not lie on any minimal gallery between $b$ and $w^{2m} b$. This implies that $\ell_S(w^{2m}) < 2 \ell_S(w^m)$, contradicting that $w$ is straight. This proves (ii).

\medskip
For Assertion~(iii), observe that if $h$ stabilises a residue $R$ of type $J$, where $J$ is a subset of $S$, then $\ell \subset R$ hence $\mathcal H \subset \mathcal R$, since residues are combinatorially convex. Thus $R$ is the unique residue of type $J$ containing $\mathcal H$. Hence $R \cap \mathcal A$ is the unique residue of type $J$ of the apartment $\mathcal A$ containing $\mathcal H$. Hence $R \cap \mathcal A$ is stabilised by $w$. The desired assertion now follows from Theorem~5.1 in \cite{CapFuj-10-rnk1}.
\end{proof}

\subsection{Strongly contracting WPD automorphisms of buildings}

\begin{proof}[Proof of Theorem~\ref{thm:AcylBuilding}]
Let $b$ be a chamber with finite stabiliser. By hypothesis the $G$-orbit of $b$ contains a subset of an apartment which is the $W_0$-orbit of a chamber, where $W_0$ is some finite index subgroup of the Weyl group $W$. We may assume that $W_0$ is normal in $W$.

Let $w \in W$ be an element acting as a regular, strongly contracting automorphism of the Davis complex of $(W, S)$, see Proposition~\ref{prop:Coxeter}. Upon replacing $w$ by a suitable power, we may assume that $w \in W_0$. Replacing $w$ by a conjugate, we may also assume that $w$ is of minimal length in its conjugacy class. By Lemma~\ref{lem:Marquis}, it follows that $w$ is straight. 

By hypothesis, the image of the map $g\mapsto\delta(b, gb)$ contains $W_0$. In particular, there exists some $h \in G$ with $\delta(b, hb) = w$. By Lemma~\ref{lem:StraightWeylDist} the element $h$ is regular and strongly contracting, and $b$ is contained in its combinatorial hull. Theorem~\ref{regularWPD} implies that $h$ is WPD. Now Theorem~\ref{WPDstrcontAcylhyp} ensures that $G$ is acylindrically hyperbolic or virtually cyclic. But the latter case is impossible, because it would imply that $G$, and hence also $W$, is $0$- or $2$-ended.
\end{proof}

\section{Applications}

\subsection{Orthogonal forms of Kac--Moody groups}\label{sect:othformaction}
The goal of this section is to prove Theorem~\ref{findeltOrthform}.

\begin{proposition}\label{findeltOrthform} 
Let $A$ be a generalized Cartan matrix of irreducible, non-spherical, non-affine type, and $\mathcal G_A(F)$ be a Kac--Moody group of  type $A$ over a field $F$.  Let $X^+$ be one of the two factors of the twin building associated with $\mathcal  G_A(F)$ and $\mathcal A^+$ be the standard apartment of $X^+$. 

Then $\Stab_{\mathcal K_A(F)}(\mathcal A^+)$ is chamber-transitive on $\mathcal A^+$; moreover every chamber of $\mathcal A^+$ has finite stabiliser in $\mathcal K_A(F)$.
\end{proposition}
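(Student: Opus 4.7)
The plan is to prove the two assertions separately, exploiting the Chevalley involution $\omega$ and the defining data of the twin BN-pair.

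For chamber-transitivity of $\Stab_{\mathcal K_A(F)}(\mathcal A^+)$, I would first show that the standard representatives $r_i = \psi_i\left(\begin{smallmatrix} 0 & 1 \\ -1 & 0 \end{smallmatrix}\right)$ of the simple reflections all lie in $\mathcal K_A(F)$. Since $\omega$ acts on each $G_i \cong \mathrm{SL}_2(F)$ as transpose-inverse, this reduces to the direct matrix computation $\left(\begin{smallmatrix} 0 & 1 \\ -1 & 0 \end{smallmatrix}\right)^{-T} = \left(\begin{smallmatrix} 0 & 1 \\ -1 & 0 \end{smallmatrix}\right)$. Let $\tilde W = \fgen{r_1, \dots, r_n} \leq \mathcal K_A(F)$. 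Each $r_i$ lies in $N$, so $\tilde W \subseteq N \cap \mathcal K_A(F) \subseteq \Stab_{\mathcal K_A(F)}(\mathcal A^+)$. The images of the $r_i$ under the quotient $N \to N/T = W$ are the full set of generators $S$, so $\tilde W$ surjects onto $W$. As $W$ is chamber-transitive on $\mathcal A^+$, so is $\tilde W$, and a fortiori so is $\Stab_{\mathcal K_A(F)}(\mathcal A^+)$.

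For the finiteness of chamber stabilisers, transitivity from the previous paragraph means it suffices to show that $\Stab_{\mathcal K_A(F)}(C^+)$ is finite, where $C^+$ is the standard chamber. Its stabiliser in $\mathcal G_A(F)$ is $B^+$, so
\[
\Stab_{\mathcal K_A(F)}(C^+) = \mathcal K_A(F) \cap B^+.
\]
If $g$ lies in this intersection then $\omega(g) = g$, and since $\omega$ swaps $B^+$ and $B^-$ we have $g = \omega(g) \in \omega(B^+) = B^-$. Therefore $g \in B^+ \cap B^- = T$, i.e.\ $\Stab_{\mathcal K_A(F)}(C^+) \subseteq \mathcal K_A(F) \cap T$.

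Finally, recall from the construction of the Tits functor that $\omega$ acts on $T \cong (F^\times)^n$ by $t \mapsto t^{-1}$. Hence $\mathcal K_A(F) \cap T$ is the $2$-torsion subgroup of $(F^\times)^n$, which has order at most $2^n$; in particular it is finite. Combining this bound with the containment obtained above yields the desired finiteness. I do not expect any serious obstacle: the argument is essentially an unfolding of the definitions together with the fact that $\omega$ interchanges the two halves of the twin BN-pair. The only point that demands care is verifying that the chosen representatives $r_i$ of $S$ are genuinely $\omega$-fixed, since a different (equally natural) choice of lift would not be; this is precisely what makes the extended Weyl group $\tilde W$ sit inside $\mathcal K_A(F)$ and drives the whole argument.
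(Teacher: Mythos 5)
Your proof is correct and follows essentially the same route as the paper: the $\omega$-fixed lifts $r_i$ give chamber-transitivity of the stabiliser of $\mathcal A^+$, and the stabiliser of $C^+$ is trapped inside $T$ and then identified with the $2$-torsion of $(F^\times)^n$. The only (cosmetic) difference is that the paper phrases the containment in $T$ geometrically, by showing $g$ must also fix $C^-$ and invoking $\Stab(C^+,C^-)=T$, whereas you phrase it algebraically via $g=\omega(g)\in\omega(B^+)=B^-$ and $B^+\cap B^-=T$; these are the same argument.
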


\begin{proof}
Let $C= (C^+,C^-)$ be the standard twin chamber and   $\mathcal{A}=(\mathcal{A}^+,\mathcal{A}^-)$ the standard twin apartment.

We first prove  that $\Stab_{\mathcal K_A(F)}(\mathcal A^+)$ is transitive on the chambers of $\mathcal A^+$. Indeed, for any $i \in I$, consider the rank~$1$ subgroup $G_i \cong \mathrm{SL}_2(F)$. By the definition of the Chevalley involution $\omega$, we know that $G_i$ is $\omega$-invariant and that $\omega$ acts on $G_i$ as the transpose-inverse automorphism. Therefore we have $\mathcal K_A(F)\cap G_i = \psi_i(\mathrm{SO}_2(F))$. In particular we have $r_i = \psi_i( \left(
\begin{array}{cc}
0 & 1
\\
-1 & 0
\end{array}
\right)
) \in \mathcal K_A(F)$. 
By construction of the building $X^+$, the element $r_i$ stabilises $\mathcal A^+$ and maps the chamber $C^+$ to the unique chamber of $\mathcal A^+$ which is $r_i$-adjacent to it. Therefore the subgroup $\tilde W = \fgen{ r_1, \dots, r_n } \leq \mathcal K_A(F)$ stabilises $\mathcal A^+$ and is transitive on the set of its chambers. This proves the claim.

It remains to prove that the stabiliser in $\mathcal K_A(F)$ of every chamber of $\mathcal A^+$ is finite. By the claim, it suffices to prove this for $C^+$. For each $g \in  \Stab_{\mathcal K_A(F)}(C^+)$, we have $g(C^-) = g\omega(C^+) = \omega g(C^+) = \omega(C^+) = C^-$. Therefore $\Stab_{\mathcal K_A(F)}(C^+) = \Stab_{\mathcal K_A(F)}(C^+, C^-) =   \mathcal K_A(F) \cap T$. By construction $T \cong (F^\times)^n$ and $\omega$ preserves $T$ and acts on it by taking inverses. Therefore an element $\eta(y_1, \dots, y_n) \in T$ is fixed by $\omega$ if and only if $y_i^2 =1$ for all $i$. The latter polynomial equation has at most two roots in $F$, so that $ |\mathcal K_A(F) \cap T| \leq 2^n$. 
\end{proof}

To complete the proof of the Main Theorem and of Corollary~\ref{cor:sharper}, we apply Theorem~\ref{thm:AcylBuilding} to deduce that the orthogonal form $\mathcal K(F)$, and more generally any subgroup containing a finite index subgroup of $\Stab_{\mathcal K_A(F)}(\mathcal A^+)$,  is acylindrically hyperbolic.

\subsection{Graph products}\label{sec:GraphProd}

Let $\Gamma = (V, E)$ be a finite simple graph with more than one vertex. Let $(G_v)_{v \in V}$ be a collection of non-trivial groups indexed by $V$. The \textbf{graph product} of the groups $(G_v)_{v \in V}$ along $\Gamma$ is the group $G$ defined as the quotient of the free product $\ast_{v \in V} G_v$ by the relations $[G_v, G_w]=1$, where $\set{v, w}$ runs over all pairs of vertices spanning an edge of $\Gamma$.  We say that the graph product is \textbf{irreducible} if $\Gamma$ is not a join, i.e. if $V$ has no non-trivial partition $V = V_1 \cup V_2$ such that every vertex in $V_1$ is adjacent to every vertex in $V_2$. Notice that if $\Gamma$ is not irreducible, then the graph product $G$ splits non-trivially as a direct product of groups, and thus cannot be acylindrically hyperbolic (unless one of the factors is finite). 

Define $W_\Gamma$ to be the right-angled Coxeter group indexed by $\Gamma$, i.e. the graph product of groups of order~$2$ along $\Gamma$. An important feature of graph products along $\Gamma$ is that they act on right-angled buildings with Weyl group $W_\Gamma$. 

\begin{proposition}\label{prop:GraphProd}
The graph product $G$ of the groups $(G_v)_{v \in V}$  along the graph $\Gamma$ acts by automorphisms on a right-angled building $X$ with Weyl group $W_\Gamma$. The action on the set of chambers is free and transitive. 
\end{proposition}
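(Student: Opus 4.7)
The plan is to realise $X$ concretely by taking its set of chambers to be $G$ itself, with $G$ acting by left multiplication. For each $v \in V$, declare two chambers $g_1, g_2 \in G$ to lie in the same $v$-panel precisely when $g_1^{-1}g_2 \in G_v$; the $v$-panels are then the left cosets of $G_v$ in $G$, each of cardinality $|G_v| \geq 2$. This already makes the action of $G$ on chambers free and transitive, and preserves the panel structure, so the substantive content is to equip this chamber system with a Weyl-distance making it a right-angled building of type $(W_\Gamma, S)$.

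For this I would invoke Green's normal form theorem for graph products: every $g \in G$ admits a decomposition $g = h_1 h_2 \cdots h_n$ with each $h_i \in G_{v_i} \setminus \{1\}$ and such that the associated syllable word $s_{v_1} s_{v_2} \cdots s_{v_n}$ is reduced in $W_\Gamma$; moreover, any two such decompositions of the same element differ by a sequence of commutation shuffles $h_i h_{i+1} \mapsto h_{i+1} h_i$ (permitted exactly when $\{v_i, v_{i+1}\} \in E$), so that
\[
w(g) := s_{v_1} s_{v_2} \cdots s_{v_n} \in W_\Gamma
\]
is a well-defined invariant of $g$. Setting $\delta(g_1, g_2) := w(g_1^{-1}g_2)$ then yields a map $\delta \colon G \times G \to W_\Gamma$ which is, by construction, $G$-invariant under the diagonal left-action.

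It remains to check Tits' $W$-distance axioms. Axiom (WD1), namely that $\delta(g_1, g_2) = 1$ if and only if $g_1 = g_2$, is immediate from Green's theorem. For (WD2) and (WD3) I would examine the normal forms of $g_1^{-1}g_2$ and $g_1^{-1}g_2 h$ for $h \in G_v \setminus \{1\}$: writing $w = \delta(g_1, g_2)$, right-multiplication by $h$ either extends a normal form by one $G_v$-letter (when $\ell_S(ws_v) > \ell_S(w)$, in which case $\delta(g_1, g_2 h) = ws_v$), or a commutation shuffle brings a $G_v$-syllable to the right where it multiplies with $h$ inside $G_v$ (when $\ell_S(ws_v) < \ell_S(w)$, in which case $\delta(g_1, g_2 h) \in \{w, ws_v\}$). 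This yields (WD2), and (WD3) follows because any non-trivial element of $G_v$ realises the length-increasing extension.

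The main hurdle is really Green's normal form together with the invariance of $w(g)$; both are standard in the combinatorial theory of graph products and amount to a careful bookkeeping of how commutation moves affect syllables. Once these are in hand, the building axioms fall out, the building is right-angled because $(W_\Gamma, S)$ is, and the free chamber-transitive action of $G$ by left multiplication is automatic from the construction.
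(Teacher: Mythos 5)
Your construction is correct and is precisely the standard one underlying the result the paper invokes: the paper's ``proof'' is simply a citation to Davis (Theorem~5.1 of \emph{Buildings are CAT(0)}), where the right-angled building is built exactly as you describe, with chamber set $G$, $v$-panels the left cosets of $G_v$, and Weyl distance induced by the normal form for graph products. So you have written out in full what the paper delegates to a reference; the freeness and transitivity of the left action are indeed immediate once that set-up is in place, and your identification of Green's normal form theorem (well-definedness of $w(g)$ up to commutation shuffles) as the real content is exactly right. One small point to tighten: your justification of (WD3) only covers the length-increasing case. When $\ell_S(ws_v) < \ell_S(w)$ you cannot take an arbitrary $h \in G_v \setminus \{1\}$; you must shuffle the terminal $G_v$-syllable $k$ of a normal form of $g_1^{-1}g_2$ to the right and choose $h = k^{-1}$ so that the syllable cancels and $\delta(g_1, g_2h) = ws_v$. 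This is a one-line fix using exactly the mechanism you already describe for (WD2), so it is an imprecision rather than a gap.
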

\begin{proof}
See \cite[Theorem~5.1]{Davis}.
\end{proof}

\begin{proof}[Proof of the Corollary]
If $\Gamma$ has exactly two vertices, then $G$ is a non-trivial free product. Therefore the $G$-action on the associated Bass--Serre tree is acylindrical, and the desired result follows. 

If $\Gamma$ has more than two vertices, then the building $X$ afforded by Proposition~\ref{prop:GraphProd} is of irreducible, non-spherical and non-affine type. Proposition~\ref{prop:GraphProd} therefore ensures that the hypotheses of Theorem~\ref{thm:AcylBuilding} are satisfied.
\end{proof}

\begin{bibdiv}
\begin{biblist}
\bib{AB08}{book}{
   author={Abramenko, Peter},
   author={Brown, Kenneth S.},
   title={Buildings},
   series={Graduate Texts in Mathematics},
   volume={248},
   note={Theory and applications},
   publisher={Springer},
   place={New York},
   date={2008},
}
\bib{Ballmann}{book}{
      author={Ballmann, Werner},
       title={Lectures on spaces of nonpositive curvature},
      series={DMV Seminar},
   publisher={Birkh\"auser Verlag},
     address={Basel},
        date={1995},
      volume={25},
        note={With an appendix by Misha Brin},
}

\bib{BBF10}{article}{
      author={Bestvina, Mladen},
      author={Bromberg, Ken},
      author={Fujiwara, Koji},
       title={Constructing group actions on quasi-trees and applications to
  mapping class groups},
     journal={To appear in Publ. IHES},
        note={Preprint available from arXiv:1006.1939},
}

\bib{BBF13}{unpublished}{
      author={Bestvina, Mladen},
      author={Bromberg, Ken},
      author={Fujiwara, Koji},
       title={Bounded cohomology via quasi-trees},
        date={2013},
        note={Preprint available from arXiv:1306.1542},
}

\bib{BestvinaFujiwara}{article}{
      author={Bestvina, Mladen},
      author={Fujiwara, Koji},
       title={A characterization of higher rank symmetric spaces via bounded
  cohomology},
        date={2009},
     journal={Geom. Funct. Anal.},
      volume={19},
      number={1},
       pages={11\ndash 40},
}

\bib{BH99}{book}{
      author={Bridson, Martin~R.},
      author={Haefliger, Andr{\'e}},
       title={Metric spaces of non-positive curvature},
      series={Grundlehren der Mathematischen Wissenschaften [Fundamental
  Principles of Mathematical Sciences]},
   publisher={Springer-Verlag},
     address={Berlin},
        date={1999},
      volume={319},
        ISBN={3-540-64324-9},
      review={\MR{1744486 (2000k:53038)}},
}

\bib{CantLamy-13-Cremacylhyp}{article}{
      author={Cantat, Serge},
      author={Lamy, St{\'e}phane},
       title={Normal subgroups in the {C}remona group},
        date={2013},
        ISSN={0001-5962},
     journal={Acta Math.},
      volume={210},
      number={1},
       pages={31\ndash 94},
         url={http://dx.doi.org/10.1007/s11511-013-0090-1},
        note={With an appendix by Yves de Cornulier},
      review={\MR{3037611}},
}

\bib{CapFuj-10-rnk1}{article}{
      author={Caprace, Pierre-Emmanuel},
      author={Fujiwara, Koji},
       title={Rank-one isometries of buildings and quasi-morphisms of
  {K}ac-{M}oody groups},
        date={2010},
        ISSN={1016-443X},
     journal={Geom. Funct. Anal.},
      volume={19},
      number={5},
       pages={1296\ndash 1319},
         url={http://dx.doi.org/10.1007/s00039-009-0042-2},
      review={\MR{2585575 (2011e:20061)}},
}

\bib{CapraceMarquis}{article}{
      author={Caprace, Pierre-Emmanuel},
      author={Marquis, Timoth{\'e}e},
       title={Open subgroups of locally compact {K}ac-{M}oody groups},
        date={2013},
     journal={Math. Z.},
      volume={274},
      number={1-2},
       pages={291\ndash 313},
}

\bib{CaMoDiscrete}{article}{
      author={Caprace, Pierre-Emmanuel},
      author={Monod, Nicolas},
       title={Isometry groups of non-positively curved spaces: discrete
  subgroups},
        date={2009},
     journal={J. Topol.},
      volume={2},
      number={4},
       pages={701\ndash 746},
}

\bib{CapraceRemy}{article}{
      author={Caprace, Pierre-Emmanuel},
      author={R{\'e}my, Bertrand},
       title={Simplicity and superrigidity of twin building lattices},
        date={2009},
     journal={Invent. Math.},
      volume={176},
      number={1},
       pages={169\ndash 221},
}

\bib{CapraceSageev}{article}{
      author={Caprace, Pierre-Emmanuel},
      author={Sageev, Michah},
       title={Rank rigidity for {CAT}(0) cube complexes},
        date={2011},
     journal={Geom. Funct. Anal.},
      volume={21},
      number={4},
       pages={851\ndash 891},
}

\bib{DGO11}{unpublished}{
      author={Dahmani, Fran{\c{c}}ois},
      author={Guirardel, Vincent},
      author={Osin, Dennis},
       title={Hyperbolically embedded subgroups and rotating families in groups
  acting on hyperbolic spaces},
        date={2011},
        note={Preprint available on the arXiv:1111.7048},
}

\bib{DKN06}{article}{
	  author={Damour, Thibault},
   	  author={Kleinschmidt, Axel},
   	  author={Nicolai, Hermann},
       title={Hidden symmetries and the fermionic sector of eleven--dimensional supergravity},
        date={2006},
     journal={Phys. Lett. B},
      volume={634},
      number={(2-3)},
       pages={319\ndash 324},
}

\bib{Davis}{article}{
   author={Davis, Michael W.},
   title={Buildings are ${\rm CAT}(0)$},
   conference={
      title={Geometry and cohomology in group theory},
      address={Durham},
      date={1994},
   },
   book={
      series={London Math. Soc. Lecture Note Ser.},
      volume={252},
      publisher={Cambridge Univ. Press, Cambridge},
   },
   date={1998},
   pages={108--123},
}
\bib{DBHP06}{article}{
	  author={De Buyl, Sophie},
   	  author={Henneaux, Marc},
   	  author={Paulot, Louis},
       title={Extended $E_8$ invariance of $11$-dimensional supergravity},
        date={2006},
     journal={J. High Energy Phys.},
      volume={(2):056},
       pages={11pp. (electronic)},
}

\bib{GHKW14}{unpublished}{
	  author={Ghatei, David},
	  author={Horn, Max},
	  author={K\"{o}hl, Ralf},
	  author={Wei{\ss}, Sebastian},
	   title={Spin covers of maximal compact subgroups of {K}ac-{M}oody groups and spin extended {W}eyl groups},
	   date={2015},
	    note={Preprint available on the arXiv:1502.07294},
}

\bib{GramlichMuhlherr}{article}{
      author={Gramlich, Ralf},
      author={M{\"u}hlherr, Bernhard},
       title={Lattices from involutions of {K}ac-{M}oody groups},
        date={2008},
     journal={Oberwolfach Rep.},
      volume={5},
       pages={139\ndash 140},
}

\bib{HKL11}{unpublished}{
      author={Hainke, Guntram},
      author={K\"{o}hl, Ralf},
      author={Levy, Paul},
       title={Generalized spin representations. {P}art 1: {R}eductive
  finite-dimensional quotients of maximal compact subalgebras of {K}ac-{M}oody
  algebras},
        date={2011},
        note={Preprint available on the arXiv:1110.5576},
}

\bib{HartnickKohlMars}{article}{
      author={Hartnick, Tobias},
      author={K{\"o}hl, Ralf},
      author={Mars, Andreas},
      TITLE = {On topological twin buildings and topological split
              {K}ac-{M}oody groups},
    JOURNAL = {Innov. Incidence Geom.},
     VOLUME = {13},
       YEAR = {2013},
      PAGES = {1--71},
       ISSN = {1781-6475},
}

\bib{Helg-DG}{book}{
    AUTHOR = {Helgason, Sigurdur},
     TITLE = {Differential geometry, {L}ie groups, and symmetric spaces},
    SERIES = {Graduate Studies in Mathematics},
    VOLUME = {34},
      NOTE = {Corrected reprint of the 1978 original},
 PUBLISHER = {American Mathematical Society, Providence, RI},
      YEAR = {2001},
     PAGES = {xxvi+641},
}

\bib{Hu12}{unpublished}{
      author={Hume, David},
       title={Embedding mapping class groups into finite products of trees},
        date={2012},
        note={Preprint available on the arXiv:1207.2132.},
}

\bib{Marquis}{article}{
      author={Marquis, Timoth{\'e}e},
       title={Abstract simplicity of locally compact {K}ac-{M}oody groups},
        journal={Compos. Math.},
        volume={150},
         year={2014},
         pages={713--728},
              }

\bib{Marquis-straight}{unpublished}{
      author={Marquis, Timoth{\'e}e},
       title={Conjugacy classes and straight elements in Coxeter groups},
        date={2014},
        note={J. Algebra (to appear), preprint available on the arXiv:1310.1021},
}

\bib{minas-osin-13}{unpublished}{
	author={Ashot Minasyan},
	author={Denis Osin},
	title={Acylindrical hyperbolicity of groups acting on trees},
	date={2013},
	note={Preprint available on the arXiv:1310.6289},
}

\bib{NibloReeves}{article}{
      author={Niblo, Graham~A.},
      author={Reeves, Lawrence~D.},
       title={Coxeter groups act on {${\rm CAT}(0)$} cube complexes},
        date={2003},
     journal={J. Group Theory},
      volume={6},
      number={3},
       pages={399\ndash 413},
}

\bib{NV02}{article}{
      author={Noskov, Guennadi~A.},
      author={Vinberg, {\`E}rnest~B.},
       title={Strong {T}its alternative for subgroups of {C}oxeter groups},
        date={2002},
        ISSN={0949-5932},
     journal={J. Lie Theory},
      volume={12},
      number={1},
       pages={259\ndash 264},
      review={\MR{1885045 (2002k:20072)}},
}

\bib{Os13}{unpublished}{
      author={Osin, Denis~V.},
       title={Acylindrically hyperbolic groups},
        date={2013},
        note={Preprint available on the arXiv:1304.1246},
}

\bib{RemyCRAS}{article}{
      author={R{\'e}my, Bertrand},
       title={Construction de r\'eseaux en th\'eorie de {K}ac-{M}oody},
        date={1999},
     journal={C. R. Acad. Sci. Paris S\'er. I Math.},
      volume={329},
      number={6},
       pages={475\ndash 478},
}

\bib{Sisto-contracting}{article}{
      author={Sisto, Alessandro},
       title={Contracting elements and random walks},
     journal={To appear in Math. Z.},
        note={Preprint available on the arXiv:1112.2666},
}

\bib{Sisto-Morse}{unpublished}{
      author={Sisto, Alessandro},
       title={Quasi-convexity of hyperbolically embedded subgroups},
        date={2013},
        note={Preprint available on the arXiv:1310.7753},
}

\bib{Tits-87-Functor}{article}{
      author={Tits, Jacques},
       title={Uniqueness and presentation of {K}ac-{M}oody groups over fields},
        date={1987},
        ISSN={0021-8693},
     journal={J. Algebra},
      volume={105},
      number={2},
       pages={542\ndash 573},
         url={http://dx.doi.org/10.1016/0021-8693(87)90214-6},
      review={\MR{873684 (89b:17020)}},
}

\end{biblist}
\end{bibdiv}

\end{document}